%
%

\documentclass[12pt]{amsart}
\usepackage{amssymb,amsmath,amsthm}
\usepackage{srcltx}

\oddsidemargin=-.0cm
\evensidemargin=-.0cm
\textwidth=16cm
\textheight=22cm
\topmargin=0cm

\newtheorem{proposition}{Proposition}[section]
\newtheorem{theorem}[proposition]{Theorem}
\newtheorem{corollary}[proposition]{Corollary}
\newtheorem{lemma}[proposition]{Lemma}
\theoremstyle{definition}
\newtheorem{definition}[proposition]{Definition}
\newtheorem{remark}[proposition]{Remark}

\numberwithin{equation}{section}

\def\s{{r}}

\def\e{{\rm e}}
\def\eps{\varepsilon}
\def\d{{\rm d}}
\def\dist{{\rm dist}}
\def\ddt{\frac{\d}{\d t}}
\def\R {\mathbb{R}}

\def\A {\mathbb{A}}
\def\HH {{\mathcal H}}
\def\H {{\rm H}}

\def\LL {{\mathfrak L}}
\def\C {{\mathcal C}}
\def\BB {{\mathbb B}}
\def\D {{\rm Dom}}
\def\B {{\mathcal B}}

\def\A {{\mathfrak A}}
\def\K {{\mathcal K}}

\def\E {{\mathcal E}}

\def\Q {{\mathcal Q}}
\def\M {{\mathcal M}}

\def\cbt {{\sc cbt}}
\def \l {\langle}
\def \r {\rangle}
\def \pt {\partial_t}

\def \and{\qquad\text{and}\qquad}

\def \x {\boldsymbol{x}}
\def \au {\rm}
\def \ti {\it}
\def \jou {\rm}
\def \bk {\it}
\def \no#1#2#3 {{\bf #1} (#3), #2.}
\def \eds#1#2#3 {#1, #2, #3.}

\title[Nonlinear viscoelastic equations with memory]
{Global attractors for nonlinear\\
viscoelastic equations with memory}
\author[M. Conti, E.M. Marchini, V. Pata]
{Monica Conti, Elsa M. Marchini and Vittorino Pata}
\address{Politecnico di Milano - Dipartimento di Matematica ``F.Brioschi''
\newline\indent
Via Bonardi 9,  20133 Milano, Italy}
\email{monica.conti@polimi.it}
\email{elsa.marchini@polimi.it}
\email{vittorino.pata@polimi.it}

\subjclass[2000]{35B41, 35L72, 45G10}
\keywords{Nonlinear viscoelastic equations, memory kernel, solution semigroup, global attractor}

\begin{document}

\begin{abstract}
We study the asymptotic properties of the semigroup $S(t)$ arising from the
nonlinear viscoelastic equation with hereditary memory
on a bounded three-dimensional domain
$$|\partial_t u|^\rho \partial_{tt} u-\Delta \partial_{tt} u
-\Delta \partial_t u-\Big(1+\int_0^\infty \mu(s)\,\d s \Big)\Delta u+\int_0^\infty \mu(s)\Delta u(t-s)\,\d s +f(u)=h$$
written in the past history framework of Dafermos~\cite{DAF}.
We establish the existence of the global attractor of optimal regularity for $S(t)$
when $\rho\in[0,4)$
and $f$ has polynomial growth of (at most) critical order $5$.
\end{abstract}

\maketitle

\section{Introduction}

\subsection{The model system}
Given a bounded domain $\Omega\subset\R^3$ with smooth boundary $\partial\Omega$
and $\rho\in[0,4]$, we
consider for $t>0$ the system of equations
\begin{equation}
\label{SYS}
\begin{cases}
\displaystyle
|\partial_t u|^\rho\partial_{tt} u-\Delta\partial_{tt}u-\Delta\partial_t u
-\Delta u -\int_0^\infty \mu(s) \Delta\eta(s)\,\d s + f(u)=h\\
\partial_t\eta=-\partial_s\eta+ \partial_t u
\end{cases}
\end{equation}
in the real-valued unknowns
$$u=u(\x,t)\and \eta=\eta^t(\x,s),$$
where $\x\in\Omega$, $t\in[0,\infty)$ and $s\in\R^+=(0,\infty)$.
System~\eqref{SYS} is complemented by the Dirichlet boundary condition
\begin{equation}
\label{SYSBC1}
u(\x,t)_{|\x\in\partial\Omega}=0,
\end{equation}
and by the ``boundary condition" for $\eta$
\begin{equation}
\label{SYSBC2}
\lim_{s\to 0}\eta^t(\x,s)=0.
\end{equation}
The model is subject to
the initial conditions (the dependence on $\x$ is omitted)
$$
u(0)=u_0,\qquad \partial_t u(0)=v_0,
\qquad
\eta^0=\eta_0,
$$
where $u_0, v_0:\Omega\to\R$ and $\eta_0:\Omega\times\R^+\to\R$
are prescribed functions.
The external force $h$ is time-independent,
while the locally Lipschitz nonlinearity $f$, with $f(0)=0$,
fulfills the critical
growth restriction
\begin{equation}
\label{GROW}
|f(u)-f(v)|\leq c|u-v|(1+|u|^4+|v|^4),
\end{equation}
along with the dissipation conditions\footnote{Conditions \eqref{DISS1}-\eqref{DISS2} follow for instance by
requiring $f\in\C^1(\R)$ with
$\displaystyle\liminf_{|u|\to\infty}f'(u)>-\lambda_1$.}
\begin{align}
\label{DISS1}
f(u)u &\geq F(u)-\frac{\lambda_1}2 (1-\nu)|u|^2-m_f\\
\label{DISS2}
F(u)&\geq -\frac{\lambda_1}2 (1-\nu) |u|^2-m_f
\end{align}
for some $\nu>0$ and $m_f\geq 0$.
Here $\lambda_1>0$ denotes the first eigenvalue of the Dirichlet operator $-\Delta$
and
$$F(u)=\int_0^u f(y)\,\d y.$$
Finally,
the convolution (or memory) kernel $\mu$ is a
nonnegative, nonincreasing, piecewise absolutely continuous
function on $\R^+$ of finite total mass
$$\int_0^\infty\mu(s)\,\d s=\kappa\geq 0$$
complying with the further assumption
\begin{equation}
\label{NEC1}
\int_s^\infty\mu(\sigma)\,\d\sigma\leq\Theta\mu(s),
\end{equation}
for some $\Theta>0$.
In particular, $\mu$ is allowed to exhibit (even infinitely many) jumps,
and can be unbounded about the origin.
At the same time, $\mu$ can be identically zero,
yielding the equation
$$|\partial_t u|^\rho \partial_{tt} u-\Delta \partial_{tt} u
-\Delta\partial_t u -\Delta u+f(u)=h.$$

\begin{remark}
Assuming the past history of $u$ to be known, from the second equation of~\eqref{SYS}
together with~\eqref{SYSBC2}
one deduces the formal equality (see \cite{DAF})
\begin{equation}
\label{ForE}
\eta^t(s)=u(t)-u(t-s).
\end{equation}
Accordingly, the first equation becomes
$$|\partial_t u|^\rho\partial_{tt} u-\Delta\partial_{tt}u-\Delta\partial_t u
-(1+\kappa)\Delta u +\int_0^\infty \mu(s) \Delta u(t-s)\,\d s + f(u)=h.$$
This provides a generalization, accounting for memory effects in the material,
of equations of the form
$$\varrho(\partial_t u)\partial_{tt}u-\Delta \partial_{tt} u-\Delta \partial_t u-\Delta u+ f(u)=h,$$
arising
in the description of the vibrations of thin rods whose
density $\varrho$ depends on the velocity $\partial_t u$ (see e.g.\ \cite{Love}).
\end{remark}

\subsection{Earlier contributions}
The Volterra version of~\eqref{SYS} with $f=h\equiv 0$
$$|\partial_t u|^\rho\partial_{tt} u-\Delta\partial_{tt}u-\Delta\partial_t u
-(1+\kappa)\Delta u +\int_0^t \mu(s) \Delta u(t-s)\,\d s=0,$$
corresponding to the choice of the initial datum $\eta_0=u_0$,
has been considered by several authors, also with different kind of damping terms,
in concern with the decay pattern of solutions
(see \cite{CCF,HW1,HW2,Li1,Li2,MM,MT1,MT2,MT3,PP,Wu}).
On the contrary, the asymptotic analysis
of the whole system~\eqref{SYS} has been tackled only in the recent work~\cite{AMQ},
where the authors prove the existence of the global attractor (without any additional regularity)
within the following set of
hypotheses:

\begin{itemize}
\item The nonlinearity $f$ has at most polynomial growth $3$ and
\begin{equation}
\label{kefe}
f(u)u\geq F(u)\geq 0.
\end{equation}
\item The nonnegative kernel $\mu\in \C^1(\R^+)\cap L^1(\R^+)$ fulfills
for some $\delta>0$ and every $s\in\R^+$
the relation
\begin{equation}
\label{palta}
\mu'(s)+\delta \mu(s)\leq 0.
\end{equation}
\item The parameter $\rho\in(1,2]$.
\end{itemize}

\smallskip
In addition, the exponential decay
of solutions is obtained when $h\equiv 0$ (meaning that the global attractor $\A=\{0\}$ is exponential
as well).

\begin{remark}
Actually, analogously to what done in all the other papers on the Volterra case,
the (exponential) decay rate turns out to depend
on the size of the initial data. As we will see
in the next Section~\ref{SecABS}, a simple argument allows to get rid of
such a dependence.
\end{remark}

The restriction on $\rho$ is also motivated by the fact that
the well-posedness result for \eqref{SYS}, hence the existence of the semigroup, was
available only for $\rho\in(1,2]$ (besides for the much simpler case $\rho=0$).
On the other hand, after \cite{CMP} now we know that \eqref{SYS} generates a strongly
continuous semigroup in our more general assumptions, and in particular,
for all $\rho\in[0,4]$. This, of course, opens a new scenario which is worth to
be investigated.

\subsection{The result}
In this work, we prove that the strongly continuous semigroup $S(t)$
generated by system~\eqref{SYS} is dissipative (i.e.\ possesses bounded absorbing sets)
for all $\rho\in[0,4]$. In particular, the exponential decay of solutions occurs
whenever $m_f=0$ and $h\equiv 0$.
Besides, we establish the following theorem.

\begin{theorem}
\label{MAIN}
Let $h\in L^2(\Omega)$ and $\rho\in[0,4)$. Then $S(t)$ possesses the global attractor of optimal regularity.
\end{theorem}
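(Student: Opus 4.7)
The strategy splits into two independent ingredients, in line with the standard Lyapunov-based approach to global attractors for viscoelastic wave equations in the past history framework. First, I would establish dissipativity: the existence of a bounded absorbing set $\BB_0$ in the phase space $\HH=H^1_0(\Omega)\times H^1_0(\Omega)\times \V$ with $\V=L^2_\mu(\R^+;H^1_0(\Omega))$. The basic multiplier is $\partial_t u+\eps u$, and the memory term is handled via the identity stemming from the second equation of~\eqref{SYS} together with \eqref{NEC1}, which supplies the needed dissipation in the history space even though $\mu$ is only piecewise absolutely continuous. Conditions \eqref{DISS1}--\eqref{DISS2} ensure that the critical term $\int F(u)$ is controlled by $\tfrac12\|\nabla u\|^2$ modulo additive constants, so that the resulting generalized energy is equivalent to $\|\cdot\|_\HH^2$ on bounded sets, and Gronwall yields the absorbing ball. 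As remarked by the authors, the same argument, with $m_f=0$ and $h\equiv 0$, produces exponential decay to the equilibrium $\{0\}$.

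Next comes asymptotic compactness combined with higher regularity. I would decompose $S(t)z=D(t)z+K(t)z$ by splitting $f=f_0+f_1$, where $f_0\in\C^1(\R)$ retains the critical behavior at infinity and satisfies $f_0(0)=f_0'(0)=0$, while $f_1$ is globally Lipschitz. The "decaying" component $D(t)z=(v,\partial_t v,\zeta)$ solves
\begin{equation*}
|\partial_t u|^\rho\partial_{tt}v-\Delta \partial_{tt}v-\Delta\partial_t v -\Delta v-\int_0^\infty\mu(s)\Delta\zeta(s)\,\d s+f_0(v)=0,
\end{equation*}
coupled with $\partial_t\zeta=-\partial_s\zeta+\partial_t v$ and initial datum $z$. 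Testing with $\partial_t v+\eps v$ and exploiting the sign of $f_0(v)v$ near the origin yields
$$\|D(t)z\|_\HH\leq Q(\|z\|_\HH)\e^{-\omega t},\qquad \omega>0,$$
on bounded sets of $\HH$, uniformly in time. The "smoothing" component $K(t)z=(w,\partial_t w,\xi)$ then obeys a difference equation driven by $-f_1(u)+f_0(v)-f_0(u)+h$, whose right-hand side is bounded in $L^2(\Omega)$ by the dissipative estimate and by the Lipschitz/subcritical character of the effective nonlinearity.

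The heart of the proof is to show that $K(t)z$ is uniformly bounded in the more regular space $(H^2\cap H^1_0)\times (H^2\cap H^1_0)\times L^2_\mu(\R^+;H^2\cap H^1_0)$, which gives simultaneously the asymptotic compactness in $\HH$ and the optimal regularity of $\A$. The natural multipliers are $-\Delta \partial_t w$ (paired with $-\Delta \xi$ in the history equation). The only genuinely dangerous contribution in the resulting higher-order energy identity comes from the quasi-linear term $|\partial_t u|^\rho\partial_{tt} w$, which one pairs with $-\Delta \partial_t w$ and integrates by parts in time as in~\cite{CMP}, thus exchanging a bound on $\partial_{tt} u$ with a bound on $\partial_t u$ in an $L^{\rho+2}$-type space. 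Using the three-dimensional Sobolev embedding $H^1_0\hookrightarrow L^6$ and the absorbing estimate already in hand, this can be absorbed in the good dissipation $\|\nabla\partial_t w\|^2$ precisely when $\rho<4$, which is where the restriction in Theorem~\ref{MAIN} is needed; at $\rho=4$ the two terms compete at the critical Sobolev exponent with no slack. The critical nonlinear term $f_1(u)$, treated via $|f_1'(u)|\lesssim 1+|u|^4$ and the embedding above, also requires this margin to close the estimate.

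The main obstacle is therefore this borderline interplay between the quasi-linear inertial term $|\partial_t u|^\rho\partial_{tt}u$ and the critical growth of $f$: both are right at the Sobolev threshold in dimension three, and they have to be treated simultaneously in the higher-order estimate, using only the absorbing bound and the exponential smallness of $D(t)z$. Once the uniform bound on $\|K(t)z\|$ in the regular space is secured, the attractor is obtained as $\A=\omega(\BB_0)$ and inherits the optimal regularity from the bound on $K(t)$, while invariance and minimality follow from the general theory of dissipative dynamical systems.
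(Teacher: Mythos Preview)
Your overall architecture---dissipativity via a perturbed energy, then a splitting $S(t)=D(t)+K(t)$ induced by $f=f_0+f_1$---is the paper's as well, but two concrete steps in your plan would not go through as written.

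\textbf{The direct $\HH^1$ estimate fails at critical growth.} You propose to test the equation for $K(t)z$ with $-\Delta\partial_t w$ and obtain a uniform bound in $\H^2\times\H^2\times L^2_\mu(\R^+;\H^2)$ in one shot. With $f$ of critical order~5 this cannot close: the forcing contains $f_0(u)-f_0(v)$, and pairing with $A\partial_t w$ requires (after any reasonable H\"older split) control of $u$ in some $L^p$ with $p>6$, which the absorbing bound $u\in\H^1\subset L^6$ does not provide. The paper resolves this by a \emph{bootstrap}: one first gains only a small amount of regularity $\sigma=\min\{\tfrac13,\tfrac{4-\rho}{2}\}$, showing $K(t)z$ bounded in $\HH^\sigma$ and hence $\A\subset\HH^\sigma$ (here the constraint $\tfrac{3\rho}{2-\sigma}\le 6$ is exactly what forces $\rho<4$). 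Then, knowing $u\in\H^{1+\sigma}$ on the attractor, one repeats the splitting (now a purely linear one, with the full quasilinear and nonlinear terms moved to the right) and gains another $\sigma$; a key intermediate step is the improved bound $\|\partial_{tt}u\|_{1+\sigma}\le c$ on $\A$. Finitely many iterations reach $\HH^1$. Your remark that the quasilinear term is handled by ``integrating by parts in time as in~\cite{CMP}'' is not what the paper does: it estimates $\langle |\partial_t u|^\rho\partial_{tt}u, A^\sigma\partial_t\hat w\rangle$ directly via H\"older, using the a~priori bound $\|\partial_{tt}u\|_1\le \Q(R)$ already available.

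\textbf{Compactness in the memory component.} Even once $K(t)z$ is bounded in $\HH^\sigma$, you are not done with asymptotic compactness: the embedding $\M^\sigma\subset\M$ is \emph{not} compact (see~\cite{PZ}). The paper exploits the explicit representation formula for $\hat\psi^t$ to verify the hypotheses of the Pata--Zucchi compactness criterion (control of $\|T\hat\psi^t\|_\M$ and an $L^1$ tail bound on $s\mapsto\mu(s)\|\hat\psi^t(s)\|_1^2$), which your outline omits.

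A smaller point: in your decaying system you write $|\partial_t u|^\rho\partial_{tt}v$ with the coefficient borrowed from the full solution. The paper instead keeps the decaying system fully autonomous, with $|\partial_t\hat v|^\rho\partial_{tt}\hat v$, so that its exponential decay follows immediately from the dissipativity lemma applied with $h=0$ and $m_f=0$.
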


With respect to the earlier literature,
Theorem~\ref{MAIN} improves the picture in several directions:

\begin{itemize}
\item The attractor is bounded in a more regular (in fact, the best possible) space.
\smallskip
\item The nonlinearity $f$ is allowed to reach the critical polynomial order 5, under the very general
dissipation conditions \eqref{DISS1}-\eqref{DISS2}, which include for instance terms of the form
$$f(u)=u^5+au^4+bu^3+cu^2+du+e,$$
not covered by \eqref{kefe}.
\smallskip
\item Condition \eqref{NEC1} on the memory kernel $\mu$ is the most general possible one
(among the class of nonincreasing summable kernels), since its failure prevents the uniform decay
of solutions to systems with memory, no matter how the equations involved are (see~\cite{CHEP}).
As shown in~\cite{RMPS}, the condition can be equivalently stated
as
\begin{equation}
\label{nece}
\mu(\sigma+s)\leq C \e^{-\delta \sigma}\mu(s)
\end{equation}
for some $C\geq1$, $\delta>0$, every $\sigma\geq 0$ and almost every $s>0$.
Observe that the latter inequality with $C=1$ boils down to~\eqref{palta} (actually, for a.e.\ $s\in\R^+$).
At the same time, when $C>1$ a much wider class of memory kernels is admissible
(see~\cite{CHEP} for more comments).
\smallskip
\item The parameter $\rho$ belongs to the interval $[0,4)$.
Nonetheless, the existence of the global attractor in the case $\rho=4$,
which is critical for the Sobolev embedding,
seems to be out of reach at the moment.
\end{itemize}

\subsection*{Plan of the paper}
After introducing the functional setting (Section~\ref{SecFS}),
we dwell on the existence of the solution semigroup
(Section~\ref{SecGS}), whose dissipative features are discussed in Section~\ref{SecABS}.
Our main results on the existence of the global attractor of optimal regularity
are presented in Section~\ref{SecMS}. The remaining three Sections~\ref{SecEGA}-\ref{SecORA}
are devoted to the proofs. The final Appendix contains some technical lemmas.

\section{Functional Setting}
\label{SecFS}

\noindent
We denote by $A=-\Delta$
the Dirichlet operator on $L^2(\Omega)$ with
domain $H^2(\Omega)\cap H_0^1(\Omega)$.
For $r\in\R$,
we define the scale of compactly nested Hilbert spaces
$$\H^r=\D(A^{\frac{r}2}),\qquad
\langle u,v\rangle_r=\langle A^{\frac{r}2} u,A^{\frac{r}2} v\rangle_{L^2(\Omega)},
\qquad
\|u\|_r=\|A^{\frac{r}2} u\|_{L^2(\Omega)}.$$
The index $r$ is omitted whenever zero. In particular,
$$\H^{-1}=H^{-1}(\Omega),\qquad \H=L^2(\Omega),\qquad
\H^{1}=H_{0}^{1}(\Omega),
\qquad \H^2=H^2(\Omega)\cap H_0^1(\Omega),$$
and we have the generalized Poincar\'e inequalities
$${\lambda_1}\|u\|_r^2\leq \|u\|_{1+r}^2.$$

\begin{remark}
It is readily seen that the dissipation conditions \eqref{DISS1}-\eqref{DISS2} imply
\begin{align}
\label{extra1}
&\l f(u),u\r\geq \l F(u),1\r-\frac12(1-\nu)\|u\|_1^2-M_f,\\
\label{extra2}
&\l F(u),1\r\geq -\frac12(1-\nu)\|u\|_1^2-M_f,
\end{align}
where $M_f=m_f|\Omega|$.
\end{remark}

Next, we introduce the history spaces
$$
\M^r=L^2_\mu(\R^+;\H^{1+r})
$$
endowed with the inner products
$$\langle \eta,\xi\rangle_{\M^r}=
\int_0^{\infty}\mu(s)\langle\eta(s),\xi(s)\rangle_{1+r}\,\d s.
$$
We will also consider the infinitesimal generator $T$
of the right-translation semigroup on $\M$
defined as
$$T\eta=-\eta',\qquad
\D(T)=\big\{\eta\in{\M}:\eta^\prime\in\M,\,\,
\eta(0)=0\big\},$$
the {\it prime}
standing for weak derivative. The following inequality holds (see e.g.\ \cite{GP})
\begin{equation}
\label{TTT}
\l T\eta,\eta\r_{\M}\leq 0,\quad\forall\eta\in\D(T).
\end{equation}
Finally, we introduce the extended history spaces
$$
\HH^r=\H^{1+r}\times\H^{1+r}\times \M^r.$$

\subsection*{Notation}
Throughout the paper, $c\geq 0$ and $\Q(\cdot)$ will stand for a generic constant and
a generic increasing positive function, respectively.
We will use, often without explicit mention,
the usual Sobolev embeddings, as well as the Young, H\"older and Poincar\'e inequalities.

\section{The Gradient System}
\label{SecGS}

\noindent
Rewriting \eqref{SYS}-\eqref{SYSBC2} in the form
\begin{equation}
\label{SYSM}
\begin{cases}
\displaystyle
|\partial_t u|^\rho\partial_{tt} u+A\partial_{tt}u+A\partial_t u
+ Au + \int_0^\infty \mu(s) A\eta(s)\,\d s + f(u)=h,\\
\partial_t\eta=T \eta+ \partial_t u,
\end{cases}
\end{equation}
the following result is proved in \cite{CMP}.

\begin{theorem}
\label{existence}
Let $h\in\H^{-1}$ and $\rho\in[0,4]$. Then system~\eqref{SYSM}
generates a solution semigroup $S(t):\HH\to\HH$
that satisfies the joint continuity
$$(t,z)\mapsto S(t)z\in\C([0,\infty)\times\HH,\HH).$$
Besides,
given any initial data $z=(u_0,v_0,\eta_0)\in\HH$ and denoting the corresponding solution
by
$$(u(t),\partial_t u(t),\eta^t)=S(t)z,$$
we have the explicit representation formula
\begin{equation}
\label{REP}
\eta^t(s)
=\begin{cases}
u(t)-u(t-s) & 0<s\leq t,\\
\noalign{\vskip1mm}
\eta_0(s-t)+u(t)-u_0 & s>t.
\end{cases}
\end{equation}
\end{theorem}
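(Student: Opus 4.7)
The plan is to construct solutions by a Faedo--Galerkin scheme, establish uniqueness and joint continuity via an energy estimate on the difference of two solutions, and derive \eqref{REP} by integrating the history transport equation along characteristics.

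First, I would project the first equation of \eqref{SYSM} onto $V_n=\mathrm{span}\{e_1,\ldots,e_n\}$, with $\{e_j\}$ the eigenbasis of $A$, and couple it to the history equation $\partial_t\eta_n=T\eta_n+\partial_t u_n$ on $\M$. On $V_n$ the operator $|\partial_t u_n|^\rho I + A_n$ is invertible with continuous inverse, so local existence of $(u_n,\eta_n)$ follows from standard ODE theory (the $\eta_n$ component being obtained through the translation semigroup generated by $T$). Testing with $\partial_t u_n$, using the history equation and \eqref{TTT} produces the basic identity
$$
\frac{\d}{\d t}\E_n + \|\partial_t u_n\|_1^2 - \tfrac12\int_0^\infty \mu'(s)\|\eta_n(s)\|_1^2\,\d s = \l h,\partial_t u_n\r,
$$
where
$$
\E_n = \tfrac{1}{\rho+2}\int_\Omega|\partial_t u_n|^{\rho+2}\,\d\x + \tfrac12\|\partial_t u_n\|_1^2 + \tfrac12\|u_n\|_1^2 + \tfrac12\|\eta_n\|_\M^2 + \l F(u_n),1\r.
$$
The dissipation conditions \eqref{extra1}--\eqref{extra2} and the Sobolev embedding $\H^1\hookrightarrow L^6$ (which accommodates $\rho+2\leq 6$) render $\E_n$ coercive modulo constants, yielding uniform bounds on $[0,T]$ and hence global existence of the approximants.

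Second, I would pass to the limit. The bounds yield weak-$\ast$ convergence of $u_n$ in $L^\infty(0,T;\H^1)$, of $\partial_t u_n$ in $L^\infty(0,T;\H^1)\cap L^\infty(0,T;L^{\rho+2}(\Omega))$, and of $\eta_n$ in $L^\infty(0,T;\M)$. An Aubin--Lions-type compactness argument delivers $u_n\to u$ strongly in $\C([0,T];\H^{1-\varepsilon})$, which, together with pointwise a.e.\ convergence and the $L^{6/5}$-bound on $f(u_n)$ coming from \eqref{GROW}, lets the nonlinearity pass to the limit. The quasilinear term is handled by the monotonicity of $v\mapsto|v|^\rho v$ via a Minty-type argument, and the memory convolution is linear in $\eta_n$. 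The explicit representation \eqref{REP} then follows from the method of characteristics applied to $\partial_t\eta=T\eta+\partial_t u$ with boundary condition $\eta^t(0)=0$: for $s\leq t$ the characteristic reaches the boundary $s=0$ at time $t-s$, yielding $\eta^t(s)=\int_{t-s}^t\partial_\tau u\,\d\tau=u(t)-u(t-s)$, while for $s>t$ it reaches the initial datum $\eta_0$, giving $\eta_0(s-t)+u(t)-u_0$.

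Third, for uniqueness and joint continuity, the difference $(\bar u,\bar\eta)$ of two solutions (relative to the same $h$) satisfies
$$
|\partial_t u_1|^\rho \partial_{tt} u_1-|\partial_t u_2|^\rho \partial_{tt} u_2 + A\partial_{tt}\bar u + A\partial_t\bar u + A\bar u + \int_0^\infty\mu(s)A\bar\eta(s)\,\d s = -(f(u_1)-f(u_2)),
$$
coupled with $\partial_t\bar\eta=T\bar\eta+\partial_t\bar u$. Testing by $\partial_t\bar u$, the quasilinear difference contributes a nonnegative term by monotonicity up to lower-order remainders, while $f(u_1)-f(u_2)$ is bounded using \eqref{GROW} and H\"older's inequality in $L^6$; combined with \eqref{TTT} and Gronwall, this closes an estimate in the norm of $\HH$, delivering continuous dependence and hence the claimed joint continuity of $S(t)$. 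The main obstacle is precisely the borderline regime $\rho=4$ together with the critical fifth-order growth of $f$: both the quasilinear inertial term and the nonlinearity saturate the critical embedding $\H^1\hookrightarrow L^6$ simultaneously, so the energy identities admit no slack and the monotonicity of $v\mapsto|v|^\rho v$ must be exploited in full to absorb the leading difference term; controlling these two critical effects at once is what makes the argument delicate.
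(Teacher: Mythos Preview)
The paper does not prove this theorem: it is quoted from the companion work \cite{CMP}, with no argument given here beyond the citation. There is therefore no in-paper proof to compare your sketch against.

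That said, your outline is the natural Faedo--Galerkin/energy/compactness strategy and is a reasonable blueprint for what \cite{CMP} presumably contains. One step deserves more care. In the uniqueness estimate, when you test the quasilinear difference $|\partial_t u_1|^\rho\partial_{tt}u_1-|\partial_t u_2|^\rho\partial_{tt}u_2$ against $\partial_t\bar u$, monotonicity of $v\mapsto|v|^\rho v$ alone does not directly produce a sign, because the second time derivative sits outside the monotone map. Writing the term as $\tfrac{1}{\rho+1}\partial_t\big(|\partial_t u_1|^\rho\partial_t u_1-|\partial_t u_2|^\rho\partial_t u_2\big)$ and integrating by parts in $t$, or alternatively splitting it as $|\partial_t u_1|^\rho\partial_{tt}\bar u+(|\partial_t u_1|^\rho-|\partial_t u_2|^\rho)\partial_{tt}u_2$, one gets a time derivative of a nonnegative quantity plus a remainder that genuinely involves $\partial_{tt}u_i$. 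Closing Gronwall then requires the uniform bound $\|\partial_{tt}u\|_1\leq\Q(R)$ of Proposition~\ref{propppppy} (also imported from \cite{CMP}) together with the H\"older balance $\tfrac{\rho}{6}+\tfrac{1}{6}+\tfrac{1}{6}\leq 1$, which is exactly where the restriction $\rho\leq 4$ enters. Your final paragraph gestures at this criticality but does not isolate the mechanism; making it explicit is what separates the sketch from a proof.
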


Moreover, defining the energy at time $t$ of the solution $S(t)z$ as
\begin{equation}
\label{EE}
E(t)=\frac12\|S(t)z\|_\HH^2=\frac12\big[\|u(t)\|_1^2+\|\partial_t u(t)\|_{1}^2+\|\eta^t\|_{\M}^2 \big],
\end{equation}
we have (see \cite{CMP})

\begin{proposition}
\label{propppppy}
The uniform estimates
$$
E(t)+\|\partial_{tt} u(t)\|_1\leq \Q(R)
$$
and
$$
\int_0^\infty\|\partial_t u(t)\|_1^2\,\d t\leq \Q(R)
$$
hold for every initial data $z\in\HH$ with $\|z\|_\HH\leq R$.
\end{proposition}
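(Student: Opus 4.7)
My plan is to derive both estimates from a single basic energy identity, supplemented by an algebraic rewriting of the equation for the pointwise bound on $\partial_{tt} u$. Testing the first equation of \eqref{SYSM} against $\partial_t u$ in $L^2(\Omega)$, rewriting $\partial_t u = \partial_t\eta - T\eta$ inside the memory term, and invoking \eqref{TTT}, I would arrive at
\begin{equation*}
\ddt \E(t) + \|\partial_t u(t)\|_1^2 \leq \langle h, \partial_t u(t)\rangle,
\end{equation*}
where
\begin{equation*}
\E(t) = \tfrac{1}{\rho+2}\int_\Omega |\partial_t u(t)|^{\rho+2}\,\d\x + \tfrac12\|\partial_t u(t)\|_1^2 + \tfrac12\|u(t)\|_1^2 + \tfrac12\|\eta^t\|_\M^2 + \langle F(u(t)),1\rangle.
\end{equation*}
By \eqref{extra2}, $\E(t)+M_f$ is comparable to $E(t)$, while \eqref{GROW} together with the Sobolev embedding $\H^1\hookrightarrow L^{\rho+2}$ (valid since $\rho\leq 4$) gives $\E(0)\leq\Q(R)$.

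The next step---really the heart of the matter---is turning this differential inequality into a bound that is uniform in $t$. The only dissipation on the left is $\|\partial_t u\|_1^2$, which by itself controls neither $u$ nor $\eta$, so a naive use of Young on $\langle h,\partial_t u\rangle$ would yield only a $t$-dependent bound. The decisive observation is that $h$ is time-independent, so
\begin{equation*}
\int_0^t \langle h,\partial_t u(s)\rangle\,\d s = \langle h, u(t)-u_0\rangle.
\end{equation*}
The term $\|h\|_{-1}\|u(t)\|_1$ can then be absorbed into the coercive $\tfrac{\nu}{2}\|u(t)\|_1^2$ portion of $\E(t)$ via Young, producing
\begin{equation*}
E(t)+\int_0^t \|\partial_t u(s)\|_1^2\,\d s \leq \Q(R)
\end{equation*}
uniformly for $t\geq 0$. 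This already settles the second estimate and the $E(t)$ part of the first.

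For the remaining pointwise bound on $\|\partial_{tt} u(t)\|_1$, I would read the first equation of \eqref{SYSM} at fixed time as the elliptic identity
\begin{equation*}
|\partial_t u|^\rho\partial_{tt} u + A\partial_{tt} u = G,\qquad G := h - A\partial_t u - Au - \int_0^\infty \mu(s)A\eta^t(s)\,\d s - f(u),
\end{equation*}
and test it against $\partial_{tt} u$ in $L^2(\Omega)$. The nonlinear summand $\int_\Omega |\partial_t u|^\rho |\partial_{tt} u|^2$ is nonnegative and may simply be dropped, leaving $\|\partial_{tt} u\|_1^2 \leq \|G\|_{-1}\|\partial_{tt} u\|_1$. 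Each linear piece of $G$ has $\H^{-1}$-norm bounded by a quantity already controlled through $E(t)$, while the critical growth \eqref{GROW} combined with $\H^1\hookrightarrow L^6$ yields $\|f(u)\|_{L^{6/5}} \leq c(1+\|u\|_1^5)$, hence $\|f(u)\|_{-1}\leq\Q(R)$. Thus $\|\partial_{tt} u(t)\|_1 \leq \|G\|_{-1} \leq \Q(R)$.

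The main obstacle, in my view, is precisely the uniformity step: without the time-independence of $h$ and the telescoping it permits, the weak dissipation (acting only on $\partial_t u$) would force a bound growing with $t$. By contrast, the quasilinear term $|\partial_t u|^\rho\partial_{tt} u$---the real source of difficulty in the well-posedness theory of \cite{CMP}---plays no role here, since in the pointwise estimate on $\partial_{tt} u$ it enters with a favourable sign and can be discarded outright.
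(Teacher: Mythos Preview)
Your proof is correct. The paper itself defers to \cite{CMP} for this proposition, but the argument you give is precisely the one encoded in the paper's Lyapunov functional: your $\E(t)-\langle h,u(t)\rangle$ is exactly $\LL(S(t)z)$ from Proposition~\ref{GS}, and your ``telescoping'' of $\int_0^t\langle h,\partial_t u\rangle$ is equivalent to having absorbed $-\langle h,u\rangle$ into $\LL$ from the outset, after which \eqref{disL} and \eqref{est_lyapunov} give both bounds upon integration. Your pointwise estimate on $\|\partial_{tt}u\|_1$ is the same computation the paper carries out for \eqref{est2_hatv} in the proof of Lemma~\ref{Attr2}. One small remark: saying ``$\E(t)+M_f$ is comparable to $E(t)$'' is slightly imprecise, since the upper bound fails a priori; what you actually need (and use) is only the lower bound $\E(t)\geq \nu E(t)-M_f$ together with $\E(0)\leq\Q(R)$.
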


Finally, we show the existence of a gradient system structure.
We first recall the definition.

\begin{definition}
\label{DefA}
A function $\LL\in \C(\HH,\R)$ is called a {\it Lyapunov functional} if
\begin{itemize}
\item[(i)] $\LL(\zeta)\to\infty$ if and only if $\|\zeta\|_\HH\to\infty$;
\item[(ii)] $\LL(S(t)z)$ is nonincreasing for any $z\in \HH$;
\item[(iii)] if $\LL(S(t)z)=\LL(z)$ for all $t>0$, then $z$ is a stationary point
for $S(t)$.
\end{itemize}
If there exists a Lyapunov functional, then $S(t)$ is called a {\it gradient system}.
\end{definition}

\begin{proposition}
\label{GS}
$S(t)$ is a gradient system on $\HH$.
\end{proposition}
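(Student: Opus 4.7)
My plan is to exhibit a Lyapunov functional $\LL$ and verify conditions (i)--(iii) of Definition~\ref{DefA}. The natural candidate, suggested by formally testing the first equation of~\eqref{SYSM} against $\partial_t u$, is
\begin{equation*}
\LL(u,v,\eta) = \frac{1}{\rho+2}\|v\|_{L^{\rho+2}(\Omega)}^{\rho+2} + \frac{1}{2}\|v\|_1^2 + \frac{1}{2}\|u\|_1^2 + \frac{1}{2}\|\eta\|_{\M}^2 + \l F(u),1\r - \l h,u\r.
\end{equation*}
Since $\rho+2\le 6$, continuity of $\LL$ on $\HH$ follows routinely from the Sobolev embedding $\H^1\hookrightarrow L^6$ and the sextic bound on $F$ implied by~\eqref{GROW}.

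For property~(i), the lower bound~\eqref{extra2} combined with a Young inequality on $\l h,u\r$ produces an estimate of the form $\LL(u,v,\eta) \geq \frac{\nu}{4}\|u\|_1^2 + \frac{1}{2}\|v\|_1^2 + \frac{1}{2}\|\eta\|_{\M}^2 - c$, yielding coercivity, while the reverse implication is immediate from continuity on bounded sets. For property~(ii), I would derive the energy identity by testing the first equation of~\eqref{SYSM} against $\partial_t u$. The inertial term contributes $\tfrac{1}{\rho+2}\ddt\|\partial_t u\|_{L^{\rho+2}}^{\rho+2}$, the remaining linear terms assemble into the time derivative of the quadratic part of $\LL$, and the memory term is handled by invoking the second equation of~\eqref{SYSM} to substitute $\partial_t u = \partial_t\eta - T\eta$, producing $\tfrac{1}{2}\ddt\|\eta\|_{\M}^2 - \l T\eta,\eta\r_{\M}$. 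Collecting,
\begin{equation*}
\ddt \LL(S(t)z) = -\|\partial_t u(t)\|_1^2 + \l T\eta^t,\eta^t\r_{\M} \leq 0,
\end{equation*}
the inequality following from~\eqref{TTT}, so $\LL$ is nonincreasing along trajectories.

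For property~(iii), if $\LL(S(t)z)\equiv\LL(z)$ then the energy identity forces $\partial_t u(t)\equiv 0$ in $\H^1$, whence $u(t)\equiv u_0$ and, by continuity of the solution, $v_0=0$. Feeding $u(t)\equiv u_0$ into the representation formula~\eqref{REP} gives $\eta^t(s)=0$ for $s\le t$ and $\eta^t(s)=\eta_0(s-t)$ otherwise, so
\begin{equation*}
\|\eta^t\|_{\M}^2 = \int_0^\infty \mu(s+t)\|\eta_0(s)\|_1^2\,\d s \longrightarrow 0 \quad\text{as } t\to\infty
\end{equation*}
by dominated convergence, since $\mu$ is nonincreasing and summable (hence $\mu(s+t)\to 0$ pointwise a.e.). Constancy of $\LL$ forces $\|\eta^t\|_{\M}$ to be constant, and therefore $\eta_0=0$. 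Hence $z=(u_0,0,0)$, readily seen to be a stationary point of $S(t)$. The main technical obstacle will be the rigorous justification of the chain rule in step~(ii), particularly for the nonlinear inertial term $|\partial_t u|^\rho\partial_{tt}u$; this can be overcome by exploiting the regularity $\partial_{tt}u\in L^\infty_{\mathrm{loc}}([0,\infty);\H^1)$ provided by Proposition~\ref{propppppy}, or by approximating with smoother initial data and passing to the limit.
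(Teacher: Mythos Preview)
Your proposal is correct and follows essentially the same approach as the paper: the same Lyapunov functional, the same energy identity for (ii), and the same use of the representation formula~\eqref{REP} in (iii). The only notable difference is that in (iii) you deduce $\|\eta^t\|_{\M}\to 0$ via dominated convergence (using only that $\mu$ is nonincreasing and summable), whereas the paper invokes the kernel estimate~\eqref{nece} to get exponential decay of $\|\eta^t\|_{\M}$; your argument is slightly more elementary and avoids the specific kernel hypothesis at this step.
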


\begin{proof}
For $\zeta=(u,v,\eta)$, let us define
$$
\LL(\zeta)=\frac1{\rho+2}\int_\Omega|v|^{\rho+2}\,\d \x +\frac12\|\zeta\|_\HH^2 +\l F(u),1\r-\l h,u\r.
$$
In light of \eqref{GROW} and \eqref{extra2}, it is readily seen that
\begin{equation}
\label{est_lyapunov}
\frac{\nu}{4} \|\zeta\|_\HH^2 -c_{f,h}\leq\LL(\zeta)\leq c \|\zeta\|_\HH^2\big(1+\|\zeta\|_\HH^4\big)+\|h\|^2_{-1},
\end{equation}
where
$$
c_{f,h}=M_f+\frac{1}{\nu}\|h\|_{-1}^2.
$$
This proves (i). For sufficiently regular initial data $z$,
testing system~\eqref{SYSM} with $(\partial_t u,\eta)$ in $\H\times\M$ and recalling \eqref{TTT},
we get
\begin{equation}
\label{disL}
\ddt \LL(S(t)z)+\|\partial_t u(t)\|_1^2=\l T\eta^t,\eta^t\r_\M\leq 0,
\end{equation}
so establishing (ii). To prove (iii), we note that if $\LL(S(t)z)$ is constant, it follows that
$$\|\partial_t u(t)\|_1^2=\l T\eta^t,\eta^t\r_\M=0.$$
Therefore $\partial_t u(t)\equiv 0$, so that $u(t)=u_0$ for all $t$.
In particular, the second equation of~\eqref{SYSM} reduces to
$$\partial_t\eta=T \eta,$$
and a multiplication by $\eta$ gives 
$$\|\eta^t\|_{\M}=\|\eta_0\|_{\M},\quad\forall t\geq 0.$$
On the other hand, we learn from~\eqref{REP} that
$$
\eta^t(s)
=\begin{cases}
0 & 0<s\leq t,\\
\noalign{\vskip1mm}
\eta_0(s-t) & s>t,
\end{cases}
$$
thus, in light of \eqref{nece}
$$
\|\eta_0\|_{\M}^2=\|\eta^t\|_{\M}^2=\int_0^\infty\mu(t+s) \|\eta_0(s)\|^2_1\,\d s
\leq C\e^{-\delta t}\|\eta_0\|_{\M}^2,
$$
which forces the equality $\eta_0=0$.
In conclusion,
$$S(t)z=z=(u_0,0,0),$$
meaning that $z$ is a stationary point.
\end{proof}

\section{Dissipativity}
\label{SecABS}

\noindent
The dissipativity of $S(t)$ follows from the existence of a bounded absorbing set.
This is a straightforward consequence of the next result.

\begin{theorem}
\label{UNI-ABS}
There exists $\omega>0$ such that
$$E(t)\leq \Q(R)\e^{-\omega t}+R_0$$
whenever $E(0)\leq R$,
having set
$$R_0=\frac{4}{\nu}\big(c_{f,h}+M_f\big)
=\frac{4}{\nu}\Big(2M_f+\frac{1}{\nu}\|h\|_{-1}^2\Big).$$
\end{theorem}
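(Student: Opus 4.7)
The strategy is a standard perturbed-energy scheme. The dissipation identity
$$
\ddt\LL(S(t)z) + \|\partial_t u\|_1^2 = \l T\eta^t,\eta^t\r_\M \leq 0
$$
established in the proof of Proposition~\ref{GS} only yields monotonicity of $\LL$. To promote it to exponential convergence towards a fixed sublevel set, I would combine it with auxiliary multipliers providing control of $\|u\|_1$ and $\|\eta^t\|_\M$, and then invoke Gronwall.

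For the $u$-variable I would use the multiplier
$$
\Phi(t) = \l \partial_t u, u\r_1 + \tfrac{1}{\rho+1}\l |\partial_t u|^\rho \partial_t u, u\r,
$$
whose structure is tuned so that $\ddt\Phi$ contains precisely the combination $\l\partial_{tt} u, u\r_1 + \l|\partial_t u|^\rho \partial_{tt} u, u\r$ delivered by testing the first equation of \eqref{SYSM} against $u$. After this substitution, together with \eqref{extra1} and standard Young estimates on the cross terms, $\ddt\Phi$ is bounded above by $-\tfrac{\nu}{2}\|u\|_1^2 - \l F(u),1\r$ plus quadratic remainders in $\|\partial_t u\|_1$, $\|\eta^t\|_\M$, and the supercritical term $\|\partial_t u\|_{L^{\rho+2}}^{\rho+2}$, together with a constant of order $M_f + \|h\|_{-1}^2$. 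For the memory variable, I would use the Pata-type functional built from the tail of $\mu$,
$$
\Psi(t) = \int_0^\infty \nu(s)\|\eta^t(s)\|_1^2\,\d s, \qquad \nu(s) = \int_s^\infty \mu(\sigma)\,\d \sigma,
$$
which by \eqref{NEC1} satisfies $0 \leq \Psi(t) \leq \Theta\|\eta^t\|_\M^2$. A computation based on the second equation of \eqref{SYSM} and integration by parts in $s$ (legitimate since $\nu' = -\mu$ almost everywhere) yields $\ddt\Psi(t) \leq -\tfrac{1}{2}\|\eta^t\|_\M^2 + c\|\partial_t u(t)\|_1^2$.

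The perturbed functional $\Lambda(t) = \LL(S(t)z) + \eps(\Phi(t) + \Psi(t))$, with $\eps > 0$ small, enjoys the two-sided bound $\tfrac{\nu}{8}E(t) - c \leq \Lambda(t) \leq \Q(E(t))$ via \eqref{est_lyapunov}, and, modulo the supercritical remainder discussed below, satisfies
$$
\ddt\Lambda(t) + \omega \Lambda(t) \leq K
$$
for some $\omega > 0$ and $K$ depending only on the structural parameters and on $M_f, \|h\|_{-1}$. Indeed, once $\eps$ is small enough, the negative contributions $-\|\partial_t u\|_1^2$ and $-\tfrac{1}{2}\|\eta^t\|_\M^2$ coming from $\ddt\LL$ and $\eps\ddt\Psi$ absorb the corresponding positive ones from $\eps\ddt\Phi$, while $-\eps\tfrac{\nu}{2}\|u\|_1^2 - \eps\l F(u),1\r$, combined with \eqref{extra2} and the lower bound in \eqref{est_lyapunov}, produces the $-\omega\Lambda$ term. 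Gronwall then gives $\Lambda(t) \leq \Lambda(0)e^{-\omega t} + K/\omega$, and careful bookkeeping of the Young constants so that $K/\omega$ lands exactly at $c_{f,h}+M_f$, combined again with \eqref{est_lyapunov}, translates this into the claimed $E(t) \leq \Q(R)e^{-\omega t} + R_0$.

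The main obstacle is the supercritical term $\|\partial_t u\|_{L^{\rho+2}}^{\rho+2}$ produced by the $|\partial_t u|^\rho$-multiplier, which sits outside the natural dissipation $\|\partial_t u\|_1^2$ and thus cannot be absorbed by simply tuning $\eps$. The resolution exploits the Sobolev embedding $\H^1 \hookrightarrow L^6$ (valid in three dimensions for $\rho+2\leq 6$) together with the higher regularity $\|\partial_{tt} u(t)\|_1 \leq \Q(R)$ and the integrability $\int_0^\infty \|\partial_t u(t)\|_1^2\,\d t \leq \Q(R)$ provided by Proposition~\ref{propppppy}; these allow this contribution to be bounded by a fixed (though $R$-dependent) multiple of the genuine dissipation, absorbable into the $\Q(R)$-prefactor of the exponentially decaying term without spoiling the universal rate $\omega$.
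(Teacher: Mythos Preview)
Your perturbed-energy scheme with $\LL$, $\Phi$, $\Psi$ is essentially the paper's construction (the paper also includes $\tfrac12\|u\|_1^2$ in $\Phi$ and uses two separate small parameters $\eps,\delta$ rather than a single $\eps$, but these are cosmetic). The real issue is in your final paragraph.

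You correctly identify that the supercritical remainder $\int_\Omega|\partial_t u|^{\rho+2}$ is bounded by $\Q(R)\|\partial_t u\|_1^2$ via $\H^1\hookrightarrow L^{\rho+2}$ and the energy bound. But your claim that this can be ``absorbed into the $\Q(R)$-prefactor of the exponentially decaying term without spoiling the universal rate $\omega$'' is not justified and, in the direct argument, is false. The only available negative term is $-\|\partial_t u\|_1^2$ from $\ddt\LL$, so absorbing $\eps\,\Q(R)\|\partial_t u\|_1^2$ forces $\eps\leq c/\Q(R)$. Since the decay rate in $\ddt\Lambda+\omega\Lambda\leq K$ is produced precisely by the $\eps\Phi$ contribution, this makes $\omega=\omega(R)$. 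The integral bound $\int_0^\infty\|\partial_t u\|_1^2\,\d t\leq\Q(R)$ does not rescue this: treating the supercritical term as a forcing $g(t)$ in Gronwall gives $e^{-\omega t}\int_0^t e^{\omega s}g(s)\,\d s$, which need not decay since you have no a priori decay of $g$.

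The paper resolves this with a two-step argument you are missing. First it proves exactly what your scheme actually delivers, namely
\[
E(t)\leq\big[\Q(R)E(0)+\tfrac{4}{\nu}\|h\|_{-1}^2\big]e^{-\delta_R t}+R_0
\]
with an $R$-dependent rate (this is Lemma~\ref{lemmaUNI-ABS}). Then, since this already guarantees entry into the fixed ball of radius $\sqrt{2(1+R_0)}$ at some time $t_R$, one reapplies the lemma from $t_R$ onward with the now universal rate $\omega=\delta_{1+R_0}$; patching the two time intervals yields the statement with $\omega$ independent of $R$ and the $R$-dependence pushed entirely into the prefactor $\Q(R)$. Your proof plan becomes correct once you insert this bootstrap.
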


\begin{remark}
In light of the theorem, every ball $\BB$ of $\HH$ centered at zero with radius strictly greater than $\sqrt{2R_0}$
is a (bounded) absorbing set for $S(t)$. Recall that $\BB$ is called an absorbing set if for every bounded set
$\B\subset\HH$ there exists a time $t_\B\geq 0$ such that
$$S(t)\B\subset\BB,\quad\forall t\geq t_\B.
$$
\end{remark}

If $R_0=0$ the exponential decay of the energy occurs.

\begin{corollary}
\label{corasdf}
Let $h=0$ and $f$ satisfy \eqref{DISS1}-\eqref{DISS2}
with $m_f=0$. Then
$$E(t)\leq \Q(R)\e^{-\omega t}$$
whenever $E(0)\leq R$.
\end{corollary}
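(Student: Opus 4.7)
The plan is to derive the corollary as an immediate specialization of Theorem~\ref{UNI-ABS} rather than repeating any part of the decay argument. The theorem is stated in full generality, with $h\in\H^{-1}$ and $f$ satisfying only \eqref{DISS1}-\eqref{DISS2}; the hypotheses of the corollary, namely $h=0$ (so in particular $h\in L^2(\Omega)$) and $m_f=0$, are strictly stronger, hence they fall within the scope of the theorem. The only step is to observe that the additive constant $R_0$ appearing in the bound of Theorem~\ref{UNI-ABS} vanishes under these stronger hypotheses.

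Concretely, $h=0$ gives $\|h\|_{-1}^2=0$ and $m_f=0$ gives $M_f=m_f|\Omega|=0$, so substituting into
\[
R_0=\frac{4}{\nu}\Big(2M_f+\frac{1}{\nu}\|h\|_{-1}^2\Big)
\]
yields $R_0=0$. Feeding this back into the conclusion $E(t)\leq\Q(R)\e^{-\omega t}+R_0$ of Theorem~\ref{UNI-ABS} produces precisely $E(t)\leq\Q(R)\e^{-\omega t}$ whenever $E(0)\leq R$, which is exactly the statement of the corollary. There is no genuine obstacle at this point: the whole nontrivial content has been carried out inside Theorem~\ref{UNI-ABS}, and what remains for the corollary is the one-line specialization of the constant $R_0$ to zero.
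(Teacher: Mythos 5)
Your proposal is correct and matches the paper exactly: the paper introduces the corollary with the remark ``If $R_0=0$ the exponential decay of the energy occurs,'' i.e.\ it too obtains the statement as an immediate specialization of Theorem~\ref{UNI-ABS} by noting that $h=0$ and $m_f=0$ force $R_0=0$. Nothing further is needed.
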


As a first step, we prove the result in a weaker form, allowing the
(exponential) decay rate to depend on $R$.

\begin{lemma}
\label{lemmaUNI-ABS}
For every $R\geq 0$ there exists a constant $\delta=\delta_R>0$ such that
$$E(t)\leq \Big[\Q(R)E(0)+\frac4\nu\|h\|_{-1}^2\Big]\e^{-\delta t}+R_0$$
whenever $E(0)\leq R$.
\end{lemma}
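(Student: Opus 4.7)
The plan is to build a perturbed Lyapunov functional $\Lambda(t)=\LL(S(t)z)+\varepsilon \Phi(t)$, equivalent to the energy for $\varepsilon$ small enough (depending on $R$), and to establish the differential inequality
$$\ddt \Lambda+\delta\,\Lambda\le M_f+\tfrac{1}{\nu}\|h\|_{-1}^2,$$
whose right-hand side is comparable to $\tfrac{\nu}{4}R_0$. Combined with \eqref{est_lyapunov}, a Grönwall argument then yields the stated decay, the factor $\Q(R)E(0)$ arising from the upper bound $\Lambda(0)\le c E(0)(1+E(0)^2)+\|h\|_{-1}^2$.

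The identity \eqref{disL} gives $\ddt \LL\le -\|\pt u\|_1^2$, which dissipates only the kinetic energy; $\Phi$ must therefore generate dissipation in $\|u\|_1$ and $\|\eta\|_\M$. For $\|u\|_1$ I would take
$$\Phi_1(t)=\l \pt u,u\r_1+\tfrac{1}{\rho+1}\int_\Omega |\pt u|^\rho \pt u\, u\,\d\x,$$
so that $\ddt \Phi_1$ precisely reproduces the inertial terms $\l \ptt u,u\r_1+\int_\Omega |\pt u|^\rho \ptt u\, u\,\d\x$. Substituting from the first equation of \eqref{SYSM} and using \eqref{extra1} yields $-\|u\|_1^2-\l F(u),1\r$ modulo a controllable memory cross term, the forcing term, and a nuisance $\|\pt u\|_1^2+\|\pt u\|_{\rho+2}^{\rho+2}$. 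The Sobolev embedding $\H^1\hookrightarrow L^{\rho+2}$ (valid since $\rho+2\le 6$) combined with Proposition~\ref{propppppy} bounds the last quantity by $\Q(R)\|\pt u\|_1^2$, which is absorbed by $-\|\pt u\|_1^2$ from $\ddt \LL$ at the price of $\varepsilon\le \varepsilon_R$; this is exactly how $\delta$ inherits its $R$-dependence.

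For $\|\eta\|_\M$, since \eqref{NEC1} does not imply a pointwise $\mu'+\delta\mu\le 0$, the inequality $\l T\eta,\eta\r_\M\le 0$ is insufficient, and I would add the tail-weighted multiplier
$$\Phi_2(t)=\int_0^\infty \Big(\int_s^\infty \mu(\sigma)\,\d\sigma\Big)\|\eta^t(s)\|_1^2\,\d s.$$
Via the second equation of \eqref{SYSM} and integration by parts in $s$ (using $\eta^t(0)=0$), its derivative equals $-\|\eta\|_\M^2+2\int_0^\infty(\int_s^\infty \mu)\,\l \pt u,\eta^t(s)\r_1\,\d s$. Cauchy--Schwarz and \eqref{NEC1} dominate the cross term by $\tfrac12\|\eta\|_\M^2+c\|\pt u\|_1^2$, while the same \eqref{NEC1} gives $\Phi_2\le \Theta \|\eta\|_\M^2$, so $\Lambda\sim E(1+E^2)$ is preserved.

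Assembling these pieces and invoking \eqref{extra1}--\eqref{extra2} to convert the potential terms into $\Lambda$ itself, the sought differential inequality follows upon tuning $\varepsilon$ and $\delta$ small in a manner depending on $R$. The main obstacle is this twofold $R$-dependence: the nonlinear inertial term $|\pt u|^\rho \ptt u$ forces $\varepsilon\le \varepsilon_R$ to cancel the superlinear nuisance produced by $\Phi_1$, while the memory kernel enjoys only the integral dissipation condition \eqref{NEC1} --- markedly weaker than the textbook $\mu'+\delta\mu\le 0$ --- necessitating the tail-weighted corrector $\Phi_2$. The two mechanisms must be balanced simultaneously without destroying the equivalence $\Lambda\sim E(1+E^2)$, which is ultimately what produces the precise shape of the final estimate.
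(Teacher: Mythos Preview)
Your approach coincides with the paper's: it builds the same perturbed functional from $\LL$, your $\Phi_1$ (to which the paper adds $\tfrac12\|u\|_1^2$, calling the result $\Phi$, so that the cross term $-\l\pt u,u\r_1$ cancels), and your tail-weighted $\Phi_2$ (the paper's $\Psi$), with the same Sobolev/Proposition~\ref{propppppy} control of $\|\pt u\|_{L^{\rho+2}}^{\rho+2}$ forcing the $R$-dependence of the small parameter. One bookkeeping caveat: as written, your differential inequality has right-hand side $c_{f,h}$ \emph{independent of} $\delta$, so Gronwall would leave the remainder $c_{f,h}/\delta$, which depends on $R$ and therefore does not yield the stated $R_0$; the paper avoids this by giving $\Psi$ and $\Phi$ separate weights ($\eps$ fixed, $\delta=\delta_R$) and, crucially, by absorbing the term $\delta\l h,u\r$ arising from $\ddt(\delta\Phi)$ into $\delta\LL$ (which already contains $-\l h,u\r$) rather than estimating it, so that the right-hand side is $\delta M_f$ and the Gronwall remainder is exactly $M_f$, giving $R_0$ after invoking the lower bound in~\eqref{est_lyapunov}.
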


\begin{proof}
For a fixed $\|z\|_\HH\leq R$, let
$$\LL(t)=\LL(S(t)z)$$
be the Lyapunov functional of Proposition~\ref{GS}.
Then, we introduce the further functionals
\begin{align*}
\Psi(t)&=\int_0^\infty\Big(\int_s^\infty\mu(y)\,\d y\Big)\|\eta^t(s)\|_1^2\,\d s\\
\Phi(t)&=\frac12\|u(t)\|_1^2+\l \partial_t u(t),u(t)\r_1+\frac1{\rho+1}\l|\partial_tu(t)|^{\rho}\partial_tu(t),u(t)\r.
\end{align*}
Arguing as in the proof of Lemma~\ref{Abound_SYS} in Appendix,
\begin{equation}
\label{gazzola1}
\ddt\Psi +\frac12\|\eta\|^2_{\M}
\leq 2\Theta^2\kappa\|\partial_tu\|^2_{1}.
\end{equation}
A multiplication of the first equation of \eqref{SYSM} by $u$ gives
\begin{align*}
&\ddt\Phi
+\|u\|^2_1 +\l f(u),u\r-\l h,u\r\\
&=-\int_0^\infty\mu(s)\l \eta(s),u\r_1\,\d s+\|\partial_{t} u\|_1^2
+\frac{1}{\rho+1}\int_\Omega|\partial_t u|^{\rho+2}\,\d \x,
\end{align*}
and from \eqref{extra1} we obtain
\begin{align}
\label{gazzola2}
&\ddt\Phi+
\frac{1+\nu}{2}\|u\|^2_1 +\l F(u),1\r-\l h,u\r\\
&\leq
-\int_0^\infty\mu(s)\l \eta(s),u\r_1\,\d s+\|\partial_{t} u\|_1^2
+\frac{1}{\rho+1}\int_\Omega|\partial_t u|^{\rho+2}\,\d \x+M_f.
\nonumber
\end{align}
Fixing $\eps>0$ such that
$$1-2\eps\Theta^2\kappa\geq \frac12,$$
and for $\delta>0$ to be properly chosen later,
we consider the functional
$$\E(t)=\LL(t)+\eps\Psi(t)+\delta\Phi(t).$$
Collecting \eqref{disL} and \eqref{gazzola1}-\eqref{gazzola2}, we end up with
\begin{align*}
&\ddt \E+
\delta \LL+\delta^2\Phi+\frac{\delta}{2}(\nu-\delta)\|u\|_1^2
+\frac12(1-3\delta)\|\partial_t u\|_1^2+\frac12(\eps-\delta)\|\eta\|^2_{\M}\\
&\leq \delta M_f-\delta\int_0^\infty\mu(s)\l \eta(s),u\r_1\,\d s
+\delta^2\l \partial_{t} u,u\r_1\\
&\quad+\delta\Big(\frac{1}{\rho+1}+\frac{1}{\rho+2}\Big)\int_\Omega|\partial_t u|^{\rho+2}\,\d \x
+\frac{\delta^2}{\rho+1}\l |\partial_t u|^\rho\partial_t u, u\r.
\end{align*}
We estimate the right-hand side above in the following way.
For $\delta>0$ sufficiently small, standard computations entail
$$-\delta\int_0^\infty\mu(s)\l \eta(s),u\r_1\,\d s\leq \delta\sqrt{\kappa}\|\eta\|_{\M}\|u\|_1\leq
\frac{\delta}{8}(\nu-\delta)\|u\|_1^2+c\delta\|\eta\|^2_{\M}$$
and
$$\delta^2\l \partial_{t} u,u\r_1\leq
\frac{\delta}{8}(\nu-\delta)\|u\|_1^2+\frac14\|\partial_t u\|_1^2.$$
Moreover, by Proposition~\ref{propppppy} and the embedding $H^1\subset L^{\frac65(\rho+1)}(\Omega)$,
$$\delta\Big(\frac{1}{\rho+1}+\frac{1}{\rho+2}\Big)\int_\Omega|\partial_t u|^{\rho+2}\,\d \x
+\frac{\delta^2}{\rho+1}\l |\partial_t u|^\rho\partial_t u, u\r\leq \delta\Q(R)\|\partial_{t}u\|^2_1
+\frac\delta4(\nu-\delta)\|u\|^2_1.$$
Therefore, we arrive at
$$
\ddt \E+
\delta \LL+\delta^2\Phi
+\Big(\frac14-\frac{3\delta}{2}-\delta\Q(R)\Big)\|\partial_t u\|_1^2
+\Big(\frac\eps2-c\delta\Big)\|\eta\|^2_{\M}
\leq \delta M_f.
$$
Hence, we can choose $\delta=\delta_R$ small enough that
$$
\ddt \E+
\delta\E+\eps\Big(\frac14\|\eta\|^2_{\M}-\delta\Psi\Big)
\leq \delta M_f.
$$
Actually, since by \eqref{NEC1}
\begin{equation}
\label{psi}
0\leq\Psi\leq \Theta\|\eta\|_{\M}^2\leq 2\Theta E,
\end{equation}
up to further reducing $\delta$, we get
$$
\ddt \E+\delta\E\leq \delta M_f,
$$
and an application of the Gronwall lemma leads to
\begin{equation}
\label{la1}
\E(t)\leq \E(0)\e^{-\delta t} + M_f.
\end{equation}
We now prove that
\begin{equation}
\label{la2}
\frac{\nu}{4} E-c_{f,h}\leq \E\leq \Q(R)E+\|h\|_{-1}^2.
\end{equation}
Indeed,
\begin{align*}
|\Phi|&\leq\|u\|_1^2+\frac12\|\partial_t u\|^2_1
+\frac1{\rho+1}\int_\Omega|\partial_tu|^{\rho+1}|u|\,\d \x\\
&\leq
\|u\|_1^2+\frac12\|\partial_t u\|^2_1
+\frac1{\rho+1}\|\partial_t u\|_{L^{\frac65(\rho+1)}}^{\rho+1}\|u\|_{L^6}\\
&\leq
\|u\|_1^2+\frac12\|\partial_t u\|^2_1
+c\|\partial_tu\|_1^\rho\big(\|\partial_tu\|_1^2+\|u\|_1^2\big)\leq \Q(R)E.
\end{align*}
Therefore, on account of \eqref{est_lyapunov}
and \eqref{psi},
we readily get
$$\E(t)\leq \Q(R)E(t)+\|h\|_{-1}^2.$$
Besides,
$$\E\geq \LL-\delta|\Phi|\geq \frac{\nu}{2} E-c_{f,h}-\delta\Q(R)E,$$
hence, possibly by further reducing $\delta$ in dependence of $R$,
we obtain
$$\E\geq \frac{\nu}{4} E-c_{f,h}.$$
The claim follows from \eqref{la1} and \eqref{la2}.
\end{proof}

\begin{proof}[Proof of Theorem \ref{UNI-ABS}]
Let  $\|z\|_\HH\leq R$ for some $R\geq 0$. Then, we infer from Lemma~\ref{lemmaUNI-ABS}
the existence of $t_R\geq 0$ such that
$$E(t_R)\leq 1+R_0,$$
and a further application of Lemma~\ref{lemmaUNI-ABS}
yields
$$
E(t)=\frac12\|S(t-t_R)S(t_R)z\|_\HH^2
\leq \Q(R_0)\e^{\omega t_R}\,\e^{-\omega t}+R_0,\quad\forall t>t_R,
$$
where $\omega=\delta_{1+R_0}$.
At the same time, again by Lemma~\ref{lemmaUNI-ABS},
$$E(t)\leq \Q(R)+R_0,\quad\forall t\leq t_R.$$
Collecting the two inequalities we are done.
\end{proof}

\section{Main Results}
\label{SecMS}

\begin{theorem}
\label{attractor}
The semigroup $S(t)$ possesses the global attractor $\A$.
\end{theorem}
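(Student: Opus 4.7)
The plan is to apply the standard abstract criterion for the existence of a global attractor: a strongly continuous semigroup on a Banach space admits a global attractor once it possesses a bounded absorbing set and is asymptotically compact on bounded sets. Dissipativity has already been established in Theorem \ref{UNI-ABS}, which provides a bounded absorbing ball $\BB\subset\HH$; the task therefore reduces to proving asymptotic compactness of $S(t)$ on $\BB$.

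The central obstacle is that the memory component $\eta$ gains no regularity from the equation, since the translation generator $T$ is merely a contraction on $\M$ by \eqref{TTT}; coupled with the critical polynomial growth of $f$, this rules out a direct compact-perturbation argument. To circumvent this I would perform a splitting $S(t)z = L(t)z + K(t)z$, where $L(t)$ is the semigroup associated with a linear dissipative problem whose exponential decay in $\HH$ is encoded by the kernel hypothesis \eqref{NEC1} (equivalently \eqref{nece}), while $K(t)$ absorbs the contribution of $f(u)-h$. For the $K$-component the plan is to bootstrap the uniform energy estimates of Proposition \ref{propppppy} and derive uniform-in-time bounds in a strictly more regular space, of the form $\|u(t)\|_{1+r}+\|\partial_tu(t)\|_{1+r}\leq \Q(R)$ for a suitably small $r>0$. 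The memory slot is then handled through the explicit representation \eqref{REP}: the ``past'' portion $s>t$ decays in $\M$ thanks to \eqref{nece}, while the ``present'' portion $u(t)-u(t-s)$ inherits the improved regularity of $u$, and a tail estimate of the type $\int_N^\infty\mu(s)\,\d s\to 0$, which follows from \eqref{NEC1}, yields the required precompactness in $\M$.

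Assembling these pieces, the exponential decay of the $L$-part together with the enhanced regularity of the $K$-part implies that any sequence $\{S(t_n)z_n\}$ with $z_n\in\BB$ and $t_n\to\infty$ admits a convergent subsequence in $\HH$, which is exactly asymptotic compactness and closes the proof of Theorem \ref{attractor}. The gradient system structure of Proposition \ref{GS} then additionally identifies $\A$ with the unstable manifold of the set of stationary points. I expect the hardest step to be the higher-order energy estimate for the $K$-component, as the critical growth of order $5$ saturates the Sobolev embedding $\H^1\hookrightarrow L^6(\Omega)$ and the nonlinear inertial term $|\partial_tu|^\rho\partial_{tt}u$ becomes progressively harder to bound in the enhanced regularity scale as $\rho\uparrow 4$; this is presumably why the condition $\rho<4$ (rather than $\rho\leq 4$) is imposed in Theorem \ref{MAIN}.
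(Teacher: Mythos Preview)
Your overall strategy---bounded absorbing set plus asymptotic compactness via a decaying/compact splitting, with the memory slot handled through the representation formula and a tail estimate---matches the paper's. The treatment of the $\eta$-component via Lemma~\ref{lemmaPZ} is essentially what the paper does in the proof of Proposition~\ref{Attr3}.

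There is, however, a genuine gap in the splitting you propose. If $L(t)$ is \emph{linear}, then the $K$-equation carries the full nonlinearity $f(u)$ as a source. To get $\HH^\sigma$-bounds on $K(t)z=(w,\partial_t w,\psi)$ you must control
\[
\langle f(u),A^\sigma\partial_t w\rangle
\;\le\;\|f(u)\|_{L^{6/(5-2\sigma)}}\,\|A^\sigma\partial_t w\|_{L^{6/(1+2\sigma)}}
\;\le\;c\,\|u\|_{L^{30/(5-2\sigma)}}^5\,\|\partial_t w\|_{1+\sigma}.
\]
For any $\sigma>0$ this requires $u\in L^p$ with $p=30/(5-2\sigma)>6$, i.e.\ strictly more than the $\H^1$ regularity supplied by Proposition~\ref{propppppy}. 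The critical growth saturates the embedding $\H^1\hookrightarrow L^6$ exactly here, so the estimate does not close; your remark that this is ``the hardest step'' is correct, but the linear splitting gives you no tool to overcome it.

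The paper's device is to make the decaying part \emph{nonlinear}. One first decomposes $f=f_0+f_1$ (Lemma~\ref{DECOfff}) with $f_1$ globally Lipschitz and $f_0$ satisfying $f_0(s)s\ge F_0(s)\ge 0$. The decaying system \eqref{SYYSuno} keeps the full inertial term \emph{and} the nonlinearity $f_0$; since $f_0$ fulfills \eqref{DISS1}--\eqref{DISS2} with $m_f=0$ and the forcing is zero, Lemma~\ref{lemmaUNI-ABS} still delivers exponential decay in $\HH$. The point is that the source in the regular part becomes $g=h-f_0(u)+f_0(\hat v)-f_1(u)$, and the critical piece now factors through $\hat w=u-\hat v$:
\[
|f_0(u)-f_0(\hat v)|\le c\,|\hat w|\,(|u|+|\hat v|)^4.
\]
This extra factor of $\hat w$ lets you place it in $L^{6/(1-2\sigma)}\supset\H^{1+\sigma}$ while keeping $u,\hat v$ in $L^6$, and the $\HH^\sigma$-estimate \eqref{estimate_hatw} closes (with exponential-in-$t$ growth, not uniform, which is enough for asymptotic compactness at each fixed time). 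The linear splitting you describe is what the paper uses only \emph{later}, in Lemma~\ref{keylemma}, once $u$ is already known to lie in $\H^{1+r}$ and the obstruction above disappears.
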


By definition, the global attractor of $S(t)$  is
the unique compact set $\A\subset\HH$ which is at the same time
fully invariant and attracting for the semigroup
(see e.g.\ \cite{BV,HAL,HAR,TEM}). Namely,
\begin{itemize}
\item[(i)] $S(t)\A=\A$ for every $t\geq 0$; and
\smallskip
\item[(ii)] for every bounded set $\B\subset\HH$
$$\lim_{t\to\infty}\,\dist_{\HH}(S(t)\B,\A)=0,$$
\end{itemize}
where $\dist_{\HH}$ denotes the standard Hausdorff semidistance in $\HH$.
We also recall that, for an arbitrarily fixed $\tau\in\R$, the global attractor can be
given the form (see \cite{HAR})
$$\A=\big\{\zeta(\tau): \zeta \, \text{\cbt}\big\},
$$
where a complete bounded trajectory ({\cbt}) of the semigroup
is a function $\zeta\in\C_{\rm b}(\R,\HH)$
satisfying
$$\zeta(\tau)=S(t)\zeta(\tau-t),\quad\forall t\geq 0,\,\forall \tau\in\R.$$

According to \cite{CH,HAL},
the existence of a Lyapunov function (Proposition~\ref{GS})
ensures that $\A$ coincides with the unstable manifold of the set ${\mathbb S}$
of equilibria of $S(t)$, which is compact, nonempty and
made of all vectors $z^\star=(u^\star,0,0)$ with $u^\star$ solution to the elliptic equation
$A u^\star +f(u^\star)=h$. That is,
$$\A=\big\{\zeta(0):\, \zeta \text{ is a {\cbt} and } \lim_{\tau\to-\infty}\|\zeta(\tau)-{\mathbb S}\|_{\HH}=0\big\}.
$$
Moreover, the following result holds.

\begin{proposition}
\label{STAT}
Any {\cbt} $\zeta=(u,\pt u,\eta)$ fulfills the relation
$$\lim_{\tau\to\pm\infty}\|\zeta(\tau)-{\mathbb S}\|_{\HH}=0.$$
In particular,
$$
\lim_{\tau\to \pm\infty} \big[\|\pt u(\tau)\|_1+\|\eta^\tau\|_{\M}\big]=0.
$$
\end{proposition}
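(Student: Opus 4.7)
The plan is to deploy a LaSalle-type argument that leverages the gradient-system structure established in Proposition~\ref{GS}. The key observation is that, by Theorem~\ref{attractor} and the characterization of $\A$ as the union of all {\cbt}s, any {\cbt} $\zeta$ has orbit $\{\zeta(\tau):\tau\in\R\}\subset\A$, so the full orbit is relatively compact in $\HH$. Consequently the $\omega$-limit set $\omega(\zeta)$ and the $\alpha$-limit set $\alpha(\zeta)$ of $\zeta$ are nonempty compact subsets of $\A$.

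First I would exploit the Lyapunov property: since $\LL(\zeta(\tau))$ is bounded (as $\LL$ is continuous and $\zeta\in\C_{\rm b}(\R,\HH)$) and, by~\eqref{disL}, nonincreasing in $\tau$, the limits
$$L_{+\infty}=\lim_{\tau\to+\infty}\LL(\zeta(\tau)),\qquad L_{-\infty}=\lim_{\tau\to-\infty}\LL(\zeta(\tau))$$
both exist and are finite. Continuity of $\LL$ then yields $\LL\equiv L_{+\infty}$ on $\omega(\zeta)$ and $\LL\equiv L_{-\infty}$ on $\alpha(\zeta)$.

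Next I would show that $\omega(\zeta),\alpha(\zeta)\subset{\mathbb S}$. Both sets are forward-invariant under $S(t)$: if $z=\lim_n\zeta(\tau_n)\in\omega(\zeta)$, the joint continuity in Theorem~\ref{existence} gives $S(t)z=\lim_n\zeta(\tau_n+t)\in\omega(\zeta)$ for every $t\geq 0$, and the same applies to $\alpha(\zeta)$ (with $\tau_n\to-\infty$). Combined with the constancy of $\LL$ on the limit set, this yields $\LL(S(t)z)\equiv L_{\pm\infty}$ for every $t\geq 0$, and clause~(iii) of Definition~\ref{DefA} forces $z$ to be a stationary point, i.e.\ $z\in{\mathbb S}$. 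Since $\{\zeta(\tau)\}$ is relatively compact in $\HH$ with every accumulation point lying in the closed set ${\mathbb S}$, a standard topological argument gives $\dist_\HH(\zeta(\tau),{\mathbb S})\to 0$ as $\tau\to\pm\infty$. As the elements of ${\mathbb S}$ all have the form $(u^\star,0,0)$, the in-particular claim on $\pt u(\tau)$ and $\eta^\tau$ is immediate.

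The main subtlety is that the argument genuinely uses the compactness of $\A$ (and hence of the range of the {\cbt}) in the strong topology of $\HH$, not merely boundedness, to produce limit points in $\HH$ along sequences $\tau_n\to\pm\infty$; once this is in place, all remaining ingredients---monotonicity of $\LL$ along trajectories, joint continuity of $S(t)$, and clause~(iii) of the Lyapunov definition---are delivered verbatim by earlier results, and no further regularity is needed.
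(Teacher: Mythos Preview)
Your proposal is correct. The paper itself does not spell out a proof of this proposition---it is stated as a consequence of the gradient-system structure (Proposition~\ref{GS}) together with the compactness of $\A$ (Theorem~\ref{attractor}), deferring to the general theory in \cite{CH,HAL}; your LaSalle-type argument is precisely the standard proof one finds in those references.
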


\begin{corollary}
If ${\mathbb S}$
is discrete, there exist $z^\star,w^\star\in {\mathbb S}$ such that
$\zeta(\tau)\to z^\star$ in $\HH$ as $\tau\to\infty$ and $\zeta(\tau)\to w^\star$ in $\HH$ as $\tau\to-\infty$.
\end{corollary}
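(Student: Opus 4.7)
The plan is to combine compactness of $\mathbb{S}$ (stated in the text before Proposition~\ref{STAT}) with the discreteness hypothesis to conclude that $\mathbb{S}$ is finite, and then to use the continuity of the trajectory $\zeta$ together with Proposition~\ref{STAT} to pick out a single equilibrium as the limit.

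First I would note that $\mathbb{S}$, being compact and discrete in the Hausdorff space $\HH$, is necessarily finite; write $\mathbb{S}=\{z_1,\dots,z_N\}$ and set
$$d=\tfrac12\min_{i\neq j}\|z_i-z_j\|_{\HH}>0.$$
By Proposition~\ref{STAT}, there exists $\tau_0>0$ such that $\dist_{\HH}(\zeta(\tau),\mathbb{S})<d/2$ for every $\tau\geq\tau_0$. Since the balls $B_{d/2}(z_i)$ are pairwise disjoint, for each such $\tau$ there is a unique index $i(\tau)$ with $\zeta(\tau)\in B_{d/2}(z_{i(\tau)})$.

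Next I would exploit continuity: the map $\tau\mapsto i(\tau)$, viewed as a function from the connected set $[\tau_0,\infty)$ into the finite (hence discrete) set $\{1,\dots,N\}$, is continuous by the continuity of $\zeta$, and therefore constant. Calling its value $z^\star$, we deduce that $\|\zeta(\tau)-z^\star\|_{\HH}=\dist_{\HH}(\zeta(\tau),\mathbb{S})$ for all $\tau\geq\tau_0$. Invoking Proposition~\ref{STAT} once more, the right-hand side tends to zero, and so $\zeta(\tau)\to z^\star$ in $\HH$ as $\tau\to\infty$. The same argument applied on $(-\infty,-\tau_0]$ produces $w^\star\in\mathbb{S}$ with $\zeta(\tau)\to w^\star$ as $\tau\to-\infty$.

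There is no real obstacle here: the entire proof is a soft topological consequence of Proposition~\ref{STAT}, the compactness of $\mathbb{S}$, and the continuity of complete bounded trajectories (guaranteed by $\zeta\in \C_{\rm b}(\R,\HH)$, which in turn follows from the joint continuity of $S(t)$ stated in Theorem~\ref{existence}). The only point worth flagging is the passage \emph{compact plus discrete implies finite}, which justifies reducing the distance estimate to a choice among finitely many pairwise separated candidates and makes the locally constant argument work.
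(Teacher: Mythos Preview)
The paper does not actually supply a proof of this corollary; it is stated without argument as an immediate consequence of Proposition~\ref{STAT} and the compactness of $\mathbb{S}$. Your proposal is correct and is precisely the standard connectedness argument one expects here: finiteness of $\mathbb{S}$ from compactness plus discreteness, separation of the equilibria by disjoint balls, and constancy of the selected index via continuity of $\zeta\in\C_{\rm b}(\R,\HH)$ on the connected half-line. There is nothing to compare against in the paper beyond noting that your write-up makes explicit what the authors left implicit.
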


On the other hand, ${\mathbb S}$ might as well be a continuum
(e.g.\ if $F$ is a double-well potential, see \cite{HAR}). In such a case,
the convergence of a given trajectory to a single equilibrium cannot be
predicted, and is false in general. Nonetheless, if $f$ is real analytic,
there is a well-known tool which can be used in order to guarantee the
convergence of trajectories to equilibria: the
{\L}ojasiewicz-Simon inequality (see e.g.\ \cite{HJ}).

\smallskip
Coming to the regularity of the attractor, we have

\begin{theorem}
\label{attrH1}
The {\it global attractor} $\A$ of $S(t)$ is bounded in $\HH^1$.
\end{theorem}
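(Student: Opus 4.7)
The plan is to upgrade the $\HH$-boundedness of $\A$ to $\HH^1$-boundedness via a higher-order energy argument performed along complete bounded trajectories (CBTs). Fix any CBT $\zeta(\tau)=(u(\tau),\pt u(\tau),\eta^\tau)$ lying on $\A$. The invariance $S(t)\A=\A$ together with Theorem~\ref{UNI-ABS} gives $\sup_\tau\|\zeta(\tau)\|_\HH\leq R_\A$, while Proposition~\ref{STAT} provides the crucial decay $\|\pt u(\tau)\|_1,\|\eta^\tau\|_\M\to 0$ with $u(\tau)$ approaching the equilibrium set $\FF$ as $\tau\to\pm\infty$. A short elliptic bootstrap on the equilibrium equation $Au^\star=h-f(u^\star)$, starting from $u^\star\in\H^1\subset L^6$ and iterating via $W^{2,p}$ estimates, places $\FF$ in a bounded subset of $\H^2$. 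Thus the task reduces to a uniform bound on $\E_1(\tau)\sim\|u(\tau)\|_2^2+\|\pt u(\tau)\|_2^2+\|\eta^\tau\|_{\M^1}^2$ along the CBT.

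The core of the proof is a higher-order energy estimate at the $\H^2$ scale, in the spirit of Lemma~\ref{lemmaUNI-ABS}. Formally, I would test the first equation of~\eqref{SYSM} with $A\pt u$, test the memory equation with $A\eta$ in $\M$, and combine the resulting identity with auxiliary functionals $\Phi_1$ and $\Psi_1$ built as in $\Phi$ and $\Psi$ but using $\H^2$ and $\M^1$ inner products. The viscous multiplier produces dissipation $\|\pt u\|_2^2$, the memory generates the analogue of~\eqref{gazzola1} at the $\M^1$ level, and $\Phi_1$ supplies coercivity in $\|u\|_2^2$. If the critical terms can be absorbed, one obtains a differential inequality of the shape
\begin{equation*}
\ddt\E_1+\omega\E_1\leq \Q(R_\A)\bigl(1+\|\pt u\|_1^2\,\E_1\bigr).
\end{equation*}

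The hard part is handling the two borderline contributions on the right-hand side: the critical nonlinearity $\l f(u),A\pt u\r=\l f'(u)\nabla u,\nabla\pt u\r$, with $|f'(u)|\leq c(1+|u|^4)$ of maximal growth in~$\R^3$, and the quasilinear inertial piece $\l|\pt u|^\rho\ptt u,A\pt u\r$. For the first, a naive H\"older estimate saturates; the saving has to come from sharp Sobolev interpolation, such as $\H^{3/2+\eps}\hookrightarrow L^\infty$ in dimension three combined with $\|u\|_{\H^{3/2+\eps}}\leq c\|u\|_1^{1/2-\eps}\|u\|_2^{1/2+\eps}$, which traps the excess power of $\|u\|_2$ against the uniform factor $\|\pt u\|_1$, producing a multiplicative perturbation of $\E_1$ weighted by $\|\pt u\|_1^2$ after Young's inequality. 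For the inertial term, Proposition~\ref{propppppy} supplies $\|\ptt u\|_1\leq\Q(R_\A)$, and the Sobolev embedding $\H^1\hookrightarrow L^{6(\rho+1)/5}$ is strict for $\rho<4$; it is precisely this margin that allows the contribution to be absorbed into $\delta\|\pt u\|_2^2$, exhibiting once again why the restriction $\rho<4$ cannot be dropped.

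Once such a differential inequality is established, the conclusion follows by a Gronwall argument exploiting the crucial integrability $\int_\R\|\pt u(s)\|_1^2\,\d s<\infty$: indeed, \eqref{disL} gives $\ddt\LL\leq-\|\pt u\|_1^2$, and $\LL\circ\zeta$ is monotone and bounded on the CBT, so the integral is finite. Integrating from $\tau-T$ to $\tau$ and sending $T\to\infty$, the factor $\e^{-\omega T}$ dominates any slow growth of $\E_1(\tau-T)$ inherited from the convergence $u(\tau-T)\to\FF$ (bounded in $\H^2$ by the bootstrap) and from the decay $\|\pt u(\tau-T)\|_1\to 0$, leaving a uniform bound on $\E_1(\tau)$ independent of both $\tau$ and the CBT. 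The main obstacle throughout is undoubtedly the critical combination of growth~$5$ for~$f$ and the quasilinear inertial term, which is what makes the regularity of~$\A$ a genuine strengthening rather than a routine bootstrap.
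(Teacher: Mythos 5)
Your plan hinges on the differential inequality $\ddt\E_1+\omega\E_1\leq \Q(R_\A)(1+\|\pt u\|_1^2\E_1)$, and this is where the argument breaks for critical quintic growth. The interpolation you invoke does handle the pairing with $A\pt u$: after integrating by parts, $\l f(u),A\pt u\r=\l f'(u)\nabla u,\nabla\pt u\r$ naturally carries the factor $\|\pt u\|_1$, and Young's inequality puts it in the admissible form. But to have $\omega\E_1$ (rather than only $\|\pt u\|_2^2+\|\eta\|_{\M^1}^2$) on the left you must also generate dissipation in $\|u\|_2^2$, which requires the multiplier $\delta A u$ (the $\H^2$-analogue of $\Phi$), and that multiplier produces $\delta\l f(u),Au\r$ with no time-derivative factor whatsoever. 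Estimating it either directly, $|\l f(u),Au\r|\leq\|f(u)\|\,\|u\|_2\leq c(1+\|u\|_1^4\|u\|_2)\|u\|_2$, or after integration by parts with $\|u\|_{L^{12}}^4\leq c\|u\|_1^3\|u\|_2$, one ends with a term of the form $c(R_\A)\|u\|_2^2$ whose constant is bounded but not small: it can neither be absorbed into $\delta\|u\|_2^2$ nor be written as $\Q(R_\A)\|\pt u\|_1^2\E_1$. This is precisely the criticality obstruction, and it is the reason the paper never attempts a direct $\HH^1$ energy estimate: it gains regularity in fractional steps of size $\sigma=\min\{1/3,(4-\rho)/2\}$ (Lemma~\ref{keylemma}), splitting $S(t)z$, $z\in\A$, into an exponentially decaying linear part and a component $K(t)z$ with zero initial data forced by $\gamma=h-f(u)-|\pt u|^\rho\ptt u$, where $f(u)$ is measured in $L^{6/(5-2r-2\sigma)}$ using only the bound of $\A$ in $\HH^r$ already established at the previous step (together with Proposition~\ref{AttrH13} and Lemma~\ref{tutt} for the quasilinear term); no absorption of critical terms is ever needed.

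There is a second, independent gap: the backward Gronwall along the {\cbt} presupposes that $\E_1(\tau-T)$ is finite and of controlled growth, but the {\cbt} is only known to be bounded in $\HH$ (or in $\HH^\sigma$ after Section~\ref{SecFR}); the convergence $\zeta(\tau)\to\FF$ in $\HH$ together with the elliptic bound $\FF\subset\H^2$ gives no $\H^2$ bound on $u(\tau-T)$ nor an $\M^1$ bound on $\eta^{\tau-T}$ at any finite time, so the quantity you integrate may be infinite where you start. A rigorous implementation would need a regularization or approximation scheme, or --- as in the paper --- a decomposition in which the smoother component starts from zero data, so that the higher norm is finite by construction and the attractor is captured through minimality/invariance inside a closed $\HH^{r+\sigma}$-ball. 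Your treatment of the quasilinear term (using $\|\ptt u\|_1\leq\Q(R_\A)$ and the margin $\rho<4$) and the integrability $\int_{\R}\|\pt u\|_1^2\,\d t<\infty$ are sound in themselves, but they do not repair these two issues.
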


Theorem \ref{attractor} and Theorem \ref{attrH1} subsume the main Theorem \ref{MAIN} stated in the introduction.

Observe that, as ${\mathbb S}\subset\A$, if $h\in\H$ without any further assumption
we cannot have more than $\H^2$-regularity for the first component. Thus the
inclusion $\A\subset \HH^1$ is optimal.

\begin{proposition}
\label{oppy}
For every {\cbt} $\zeta=(u,\pt u,\eta)$
the formal equality \eqref{ForE}
holds true for every $t\in\R$.
In particular, it follows that $\eta^t\in\D(T)$ for all $t$.
\end{proposition}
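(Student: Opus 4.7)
The idea is to exploit the freedom we have in choosing the ``initial time'' for a complete bounded trajectory, and then invoke the representation formula~\eqref{REP} on the lower branch of the cases.

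\textbf{Step 1: reduction to the representation formula.}
Fix $t\in\R$ and $s>0$. Since $\zeta$ is a cbt, for every $T>0$ we have
$$\zeta(t)=S(T)\,\zeta(t-T).$$
Treating $\zeta(t-T)=(u(t-T),\pt u(t-T),\eta^{t-T})$ as initial data at time $0$ and applying Theorem~\ref{existence} (in particular, formula \eqref{REP}) to the solution evaluated at time $T$, we obtain
$$
\eta^t(s)=
\begin{cases}
u(t)-u(t-s) & 0<s\leq T,\\
\eta^{t-T}(s-T)+u(t)-u(t-T) & s>T.
\end{cases}
$$
For our fixed $s>0$ it suffices to pick any $T>s$, which is legitimate because the trajectory is defined on all of $\R$. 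This yields
$$\eta^t(s)=u(t)-u(t-s),$$
and since $s>0$ was arbitrary the formal equality~\eqref{ForE} holds pointwise in $s$ for every $t\in\R$.

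\textbf{Step 2: membership in $\D(T)$.}
By the cbt property and Proposition~\ref{propppppy} (or equivalently the boundedness of $\zeta$ in $\HH$), there exists a constant $K$ such that
$$\|u(\tau)\|_1+\|\pt u(\tau)\|_1\leq K\quad\forall\tau\in\R.$$
From the explicit expression obtained in Step~1, $\eta^t(\cdot)$ is differentiable in $s$ with
$$\partial_s\eta^t(s)=-\pt u(t-s),$$
so that $T\eta^t=\pt u(t-\cdot)$ in the sense of distributions in $s$. Using the uniform bound and $\mu\in L^1(\R^+)$,
$$\|\eta^t\|_{\M}^2\leq 4K^2\kappa\and
\|T\eta^t\|_{\M}^2=\int_0^\infty\mu(s)\|\pt u(t-s)\|_1^2\,\d s\leq K^2\kappa,$$
so both $\eta^t$ and $(\eta^t)'$ lie in $\M$. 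Finally, the joint continuity $(t,z)\mapsto S(t)z$ forces $u\in\C(\R,\H^1)$, whence
$$\lim_{s\to 0^+}\eta^t(s)=\lim_{s\to 0^+}\bigl(u(t)-u(t-s)\bigr)=0\quad\text{in }\H^1,$$
which is precisely the boundary condition built into $\D(T)$. Therefore $\eta^t\in\D(T)$ for every $t\in\R$.

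\textbf{Expected difficulty.} This is essentially a bookkeeping argument: all the analytic content is already in Theorem~\ref{existence} and in the uniform boundedness of cbts. The only point that deserves some care is the justification that the representation formula \eqref{REP}, which is stated for $t\geq 0$ starting from arbitrary initial data, can be transported to negative times via the cbt relation $\zeta(t)=S(T)\zeta(t-T)$; but once this shift is made, the proof reduces to choosing $T>s$ and reading off the upper branch of~\eqref{REP}.
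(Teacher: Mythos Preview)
Your proof is correct and follows essentially the same approach as the paper: shift the initial time back by $T>s$ via the cbt relation and read off the upper branch of the representation formula~\eqref{REP}. The paper does not spell out the $\D(T)$ membership, which you handle correctly (modulo an inconsequential sign: in fact $\partial_s\eta^t(s)=\pt u(t-s)$, hence $T\eta^t=-\pt u(t-\cdot)$, but the norm estimate and the conclusion are unaffected).
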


A direct consequence of the proposition
is the next corollary, whose proof is left to the reader.

\begin{corollary}
\label{corry}
Given $u\in\C_{\rm b}(\R,\H^1)\cap\C_{\rm b}^1(\R,\H^1)$
and defining $\eta=\eta^t(s)$ for all real $t$ by the formula \eqref{ForE},
the vector
$\zeta=(u,\pt u,\eta)$ is a {\cbt}
if and only if
$u$ solves  the equation
$$|\partial_t u(t)|^\rho\partial_{tt} u(t)+A\partial_{tt}u(t)+A\partial_t u(t)
+(1+\kappa)A u(t) -\int_0^\infty \mu(s) A u(t-s)\,\d s + f(u(t))=h$$
for every $t\in\R$.
\end{corollary}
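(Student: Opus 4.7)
The plan is to exploit Proposition~\ref{oppy} in one direction and the uniqueness built into Theorem~\ref{existence} in the other, so that the equivalence reduces to elementary bookkeeping between the history form of the memory term and its convolution form.

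\emph{Necessity.} If $\zeta=(u,\pt u,\eta)$ is a \cbt, Proposition~\ref{oppy} already gives $\eta^t(s)=u(t)-u(t-s)$ for every $t\in\R$ and $s>0$. I would then split the memory term as
\begin{equation*}
\int_0^\infty \mu(s)\,A\eta^t(s)\,\d s=\kappa Au(t)-\int_0^\infty \mu(s)\,Au(t-s)\,\d s
\end{equation*}
and substitute in the first equation of~\eqref{SYSM} to recover the stated convolution equation.

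\emph{Sufficiency.} Conversely, starting from $u$ solving the convolution equation, I would define $\eta^t(s)=u(t)-u(t-s)$ and verify in three steps that $\zeta=(u,\pt u,\eta)$ is a \cbt. First, $\zeta\in\C_{\rm b}(\R,\HH)$: the first two slots are bounded and continuous by hypothesis, while $\|\eta^t\|_\M^2\leq 4\kappa\|u\|_{\C_{\rm b}(\R,\H^1)}^2$, and the continuity $t\mapsto\eta^t$ in $\M$ follows from the pointwise continuity of $u$ together with a dominated convergence argument, the majorant $s\mapsto 4\mu(s)\|u\|_{\C_{\rm b}(\R,\H^1)}^2$ being integrable on $\R^+$. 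Second, $\eta^t\in\D(T)$ and the transport equation in~\eqref{SYSM} holds, since $\eta^t(0)=0$ and the weak $s$-derivative $\ps\eta^t(s)=\pt u(t-s)$ satisfies
\begin{equation*}
T\eta^t+\pt u(t)=-\pt u(t-s)+\pt u(t)=\pt\eta^t(s).
\end{equation*}
Third, reversing the identity used in the necessity part rewrites the convolution equation back as the first equation of~\eqref{SYSM}. Consequently, for every fixed $\tau\in\R$, the curve $t\mapsto\zeta(\tau+t)$ solves~\eqref{SYSM} with initial datum $\zeta(\tau)$, and the uniqueness part of Theorem~\ref{existence} forces $\zeta(\tau+t)=S(t)\zeta(\tau)$, which is exactly the \cbt\ condition.

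The only step that demands any care is the $\M$-continuity of $t\mapsto\eta^t$ under the very general assumption that $\mu$ is merely summable and possibly unbounded near the origin; however, the dominating function exhibited above lies in $L^1(\R^+)$, so dominated convergence settles the matter cleanly. Everything else amounts to algebraic manipulation paired with the identifications already provided by Proposition~\ref{oppy}.
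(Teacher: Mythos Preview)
Your proof is correct and is exactly the sort of verification the paper has in mind: the authors state the corollary as ``a direct consequence of the proposition'' and explicitly leave the proof to the reader, so there is no detailed argument in the paper to compare against. Your outline supplies the missing details cleanly.

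One small remark: in the necessity direction your appeal to Proposition~\ref{oppy} is, strictly speaking, redundant as the corollary is phrased, since $\eta$ is already \emph{defined} by~\eqref{ForE} in the hypothesis. What Proposition~\ref{oppy} really buys is the assurance that this representation captures \emph{every} \cbt, i.e.\ that no \cbt\ is missed by restricting to $\eta$ of the form~\eqref{ForE}. So the proposition is what justifies reading the corollary as a complete characterization of complete bounded trajectories, rather than an ingredient inside the ``only if'' step itself. This is a matter of emphasis rather than a gap; the mathematics in your argument is sound.
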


The proofs of the results stated above will be carried out in the subsequent sections.

\section{Existence of the Global Attractor}
\label{SecEGA}

\noindent
In what follows, let $\rho\in [0,4)$.
Besides, let $\BB$ be a given bounded absorbing set, whose existence
is guaranteed by Theorem~\ref{UNI-ABS}.
The main result of the section is

\begin{proposition}
\label{Attr3}
For any $t\geq0$, there exists a compact set $\K(t)\subset\HH$ such that
$$
\dist_{\HH}(S(t)\BB,\K(t))\leq c\e^{-\omega t}
$$
for some $c\geq 0$ and $\omega>0$ depending only on $\BB$.
\end{proposition}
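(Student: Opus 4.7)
The plan is to establish asymptotic compactness in exponential form via a semigroup decomposition $S(t)=D(t)+K(t)$, where $D(t)\BB$ contracts exponentially in $\HH$ and $K(t)\BB$ remains uniformly bounded in a more regular phase space $\HH^\sigma$ for some small $\sigma\in(0,1]$. Given such a splitting, the compact set $\K(t):=\overline{K(t)\BB}^{\HH}$ does the job, since the embedding $\HH^\sigma\hookrightarrow\HH$ is compact: Rellich takes care of the Sobolev factors $\H^{1+\sigma}\hookrightarrow\H^1$, and a compactness criterion for memory spaces delivers $\M^\sigma\hookrightarrow\M$, the necessary tail control on $\M^\sigma$-bounded sets being encoded in assumption~\eqref{NEC1}.

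For the decaying part $D(t)$, I would repeat the Lyapunov-type construction from the proof of Lemma~\ref{lemmaUNI-ABS}, applied to the homogeneous linearized system (with $f$ and $h$ suppressed) satisfied by the ``bad'' piece of the decomposition, and carrying the initial data. A differential inequality of the form $\ddt\E+\delta\E\leq 0$, for a suitable equivalent functional $\E$, then yields exponential decay in $\HH$. The only new technicality is controlling the coupling introduced by keeping $|\pt u|^\rho$ from the original solution as a frozen coefficient; here Proposition~\ref{propppppy} supplies the necessary a priori bounds that make the analysis essentially parallel to Section~\ref{SecABS}.

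For the regular part $K(t)$, I would test the corresponding equation with $(A^\sigma\pt w, A^\sigma\zeta)$ in $\H\times\M$, where $w$ and $\zeta$ denote the regular components of the decomposition, to obtain an energy identity in $\HH^\sigma$. The delicate point is estimating the source $\l h-f(u),A^\sigma\pt w\r$: since $h\in\H$, the linear piece is immediate provided $\sigma\leq 1$; for $f(u)$ with critical growth $5$, Sobolev interpolation together with H\"older-type product estimates allow one to absorb the nonlinearity for $\sigma$ sufficiently small, exploiting the uniform bound of $S(t)\BB$ in $\HH$ from Proposition~\ref{propppppy}. Using \eqref{TTT} on the memory factor to sign-away the translation term, and auxiliary functionals analogous to $\Phi,\Psi$ of Section~\ref{SecABS} to recover coercivity in $\|\zeta\|_{\M^\sigma}$, one arrives at $\|K(t)z\|_{\HH^\sigma}\leq\Q(\BB)$ uniformly in $t$ and $z\in\BB$.

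The main obstacle, which dictates the whole technical layout, is the coexistence of the critical nonlinearity $f$ at exponent $5$ with the inertial term $|\pt u|^\rho\ptt u$. Lifting regularity to $\HH^\sigma$ requires handling $|\pt u|^\rho$ as an obstruction in the higher-order test, and the restriction $\rho<4$ is exactly what provides sufficient room in the three-dimensional Sobolev embedding so that the product $|\pt u|^\rho\pt w$ can be paired against $\H^\sigma$ for some $\sigma>0$; the limiting case $\rho=4$ saturates the embedding and makes the regular-part argument collapse, which is the technical reason why the statement is confined to $\rho<4$. A secondary, but still non-trivial, difficulty is turning $\HH^\sigma$-boundedness of the memory component into compactness in $\M$: this is not a plain Sobolev embedding and needs the tail control provided by \eqref{nece}, combined with the representation \eqref{REP} applied to $\zeta$.
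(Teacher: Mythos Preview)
Your decomposition has a genuine gap at the critical exponent. If the decaying piece solves the homogeneous linearized problem (with $f$ and $h$ suppressed), then the regular piece carries the full source $h-f(u)$, and the $\HH^\sigma$ energy estimate requires controlling
\[
\l f(u),A^\sigma\partial_t w\r\leq \|f(u)\|_{L^{6/(5-2\sigma)}}\|\partial_t w\|_{1+\sigma}.
\]
With only $u\in\H^1\subset L^6$ and the critical growth $|f(u)|\leq c(1+|u|^5)$, one has $f(u)\in L^{6/5}$ and no better; since $6/5<6/(5-2\sigma)$ for every $\sigma>0$, the right-hand side is not finite. No amount of Sobolev interpolation fixes this: the exponent $5$ saturates the embedding exactly at $\sigma=0$. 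The paper's remedy (Lemma~\ref{DECOfff}) is to split $f=f_0+f_1$ with $f_1$ Lipschitz and $f_0(s)s\geq F_0(s)\geq 0$, and to make the \emph{decaying} system \emph{nonlinear}, keeping both $f_0(\hat v)$ and its own inertia $|\partial_t\hat v|^\rho\partial_{tt}\hat v$. This still decays by Lemma~\ref{lemmaUNI-ABS} (since $m_{f_0}=0$, $h=0$), while the regular part now sees $f_0(u)-f_0(\hat v)$, which carries a factor $|\hat w|$ and hence gains exactly the $\sigma$ derivatives needed (cf.\ \eqref{hatg}). Your ``frozen coefficient'' linearized system misses this mechanism.

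A second point: your opening claim that $\HH^\sigma\hookrightarrow\HH$ is compact is false, and the paper says so explicitly right after Lemma~\ref{Attr2}. Boundedness in $\M^\sigma$ together with the tail condition \eqref{NEC1} does \emph{not} yield compactness in $\M$; one also needs control of $T\eta$ in $\M$ and a pointwise bound on $\mu(s)\|\eta(s)\|_1^2$ (Lemma~\ref{lemmaPZ}). Your last paragraph gestures at the right cure via the representation formula \eqref{REP}, and this is indeed what the paper does: since $\hat\psi^0=0$, the formula gives $\hat\psi^t\in\D(T)$ with the required bounds, and only then is the closure $\overline{\Xi_t}$ compact in $\M$. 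But this is an additional argument on top of the $\HH^\sigma$ bound, not a property of the embedding; as written, your first paragraph would have you taking $\K(t)=\overline{K(t)\BB}^{\HH}$ and declaring it compact, which it is not.
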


Proposition~\ref{Attr3} tells that $S(t)$ is {\it asymptotically compact}.
Hence, invoking a general result of the theory of dynamical systems
(see e.g.\ \cite{BV,CV,HAL,HAR,TEM}), $S(t)$ possesses
the global attractor $\A$. This establishes the proof of Theorem~\ref{attractor}.

In order to prove Proposition~\ref{Attr3}, we need a suitable decomposition of $f$.

\begin{lemma}
\label{DECOfff}
The nonlinearity $f$ admits the decomposition
$$
f(s)=f_0(s)+f_1(s)
$$
for some $f_0,f_1$ with the following properties:
\begin{itemize}
\item[$\bullet$] $f_1$ is Lipschitz continuous with $f_1(0)=0$;
\smallskip
\item[$\bullet$] $f_0$ vanishes inside $[-1,1]$ and
fulfills the critical growth restriction
$$|f_0(u)-f_0(v)|\leq c|u-v|(|u|+|v|)^4$$
\item[$\bullet$] $f_0$
fulfills for every $s\in\R$ the bounds
$$
f_0(s)s\geq F_0(s)\geq 0,
$$
where $F_0(s)=\int_0^s f_0(y)\,\d y$.
\end{itemize}
\end{lemma}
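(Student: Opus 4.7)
The plan is to construct $f_0$ and $f_1$ by an explicit cutoff-and-linear-correction of $f$. Fix a smooth function $\chi\in C^\infty(\R)$ with $0\leq\chi\leq 1$, $\chi\equiv 0$ on $[-1,1]$, $\chi\equiv 1$ outside $[-2,2]$, $\chi$ even and monotone on $[0,\infty)$. For a constant $c>0$ to be chosen large enough, set
$$
f_0(u)=\chi(u)\bigl[f(u)+(\lambda_1(1-\nu)+c)\,u\bigr],\qquad f_1(u)=f(u)-f_0(u).
$$
The vanishing of $f_0$ on $[-1,1]$ and $f_1(0)=0$ are immediate. The Lipschitz continuity of $f_1$ follows by differentiating:
$$
f_1'(u)=(1-\chi(u))f'(u)-\chi'(u)\bigl[f(u)+(\lambda_1(1-\nu)+c)u\bigr]-\chi(u)(\lambda_1(1-\nu)+c),
$$
where $(1-\chi)f'$ is supported in $[-2,2]$ and bounded by \eqref{GROW}, $\chi'[\,\cdot\,]$ is supported in $[-2,-1]\cup[1,2]$ and bounded, and the last term is trivially bounded. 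For the critical growth of $f_0$, the same formula yields $|f_0'(y)|\leq C(1+|y|^4)$ with $f_0'\equiv 0$ on $[-1,1]$; integrating from $v$ to $u$ and using the vanishing on $[-1,1]$ to absorb the ``$+1$'' into $(|u|+|v|)^4$ (whenever $\max(|u|,|v|)\geq 1$) gives $|f_0(u)-f_0(v)|\leq C|u-v|(|u|+|v|)^4$.

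The main work is verifying the sign conditions. Summing \eqref{DISS1} and \eqref{DISS2} yields $uf(u)\geq-\lambda_1(1-\nu)u^2-2m_f$, whence $-f(u)/u\leq\lambda_1(1-\nu)+2m_f$ for $|u|\geq 1$. Thus for $c\geq 2m_f$ the bracket $f(u)+(\lambda_1(1-\nu)+c)u$ has the same sign as $u$ for $|u|\geq 1$; enlarging $c$ further (using boundedness of $f$ on the compact transition $[-2,-1]\cup[1,2]$) extends this sign property to the whole region $|u|\geq 1$, so $\operatorname{sign}(f_0(u))=\operatorname{sign}(u)$ there. Combined with $F_0(\pm 1)=0$, this gives $F_0\geq 0$. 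For $uf_0\geq F_0$, I split into two cases: for $|u|\geq 2$, writing out the explicit expression for $F_0$ and invoking \eqref{DISS1} gives a lower bound of the form $\tfrac{c}{2}u^2+Ac+B$ with constants $A\geq 0$ and $B$ independent of $c$, which (since $u^2\geq 4$) is positive for $c$ large; for $1\leq |u|\leq 2$, I verify that $f_0'(u)\geq 0$ once $c$ is large, so that $g(u)=uf_0(u)-F_0(u)$ satisfies $g(\pm 1)=0$ and $g'(u)=uf_0'(u)$ has the sign of $u$, which yields $g(u)\geq 0$ throughout.

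The main obstacle is the uniform choice of $c$: the same constant must dominate $2m_f$ (for the asymptotic sign of $f_0$), the finite quantities $\sup_{[1,2]\cup[-2,-1]}|f|$ and $\sup_{[1,2]\cup[-2,-1]}|f'|$ (for the transition region), and the finitely many $c$-independent constants coming from the explicit formula for $F_0$ and the integral $\int_0^2\chi(y)\,y\,\d y$. All of these are finite thanks to \eqref{GROW} and \eqref{DISS1}--\eqref{DISS2}, so a suitable $c$ exists.
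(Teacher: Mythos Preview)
Your proof is correct and follows the same underlying idea as the paper's: write $f_0=\chi\cdot(f+\text{linear})$ for a smooth cutoff $\chi$ and then check the sign conditions. The only structural difference is in which parameter you tune. The paper fixes the linear coefficient $\beta\in(\lambda_1(1-\nu),\lambda_1)$ and enlarges the cutoff radius $k\geq 1$ until $(\beta-\alpha)s^2\geq 2m_f$ on $\{|s|\geq k\}$; you instead fix the cutoff radius at $1$ (transitioning to $2$) and enlarge the linear coefficient $c$. Correspondingly, the verifications of $f_0(s)s\geq F_0(s)$ diverge: the paper handles all $|s|\geq k$ at once via the monotonicity of the cutoff and a single integral estimate based on \eqref{DISS1}--\eqref{DISS2}, whereas you split into $|u|\geq 2$ (explicit computation plus \eqref{DISS1}) and $1\leq|u|\leq 2$ (monotonicity of $f_0$ via $f_0'\geq 0$ for $c$ large). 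Both routes are short; the paper's avoids the case split, while yours keeps the cutoff supported exactly on $[-1,1]$ as in the statement rather than on a larger interval. One small remark: since $f$ is only assumed locally Lipschitz, your pointwise formulas for $f_1'$ and $f_0'$ hold a.e.; this suffices for the Lipschitz and monotonicity conclusions you draw, but is worth making explicit.
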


\begin{proof}
Set $\alpha=\lambda_1(1-\nu)$ and fix $\beta\in(\alpha,\lambda_1)$.
Collecting \eqref{DISS1}-\eqref{DISS2}, we know that
\begin{equation}
\label{kkaa}
f(s)s\geq -\alpha s^2-2m_f,\quad\forall s\in\R.
\end{equation}
Let $k\geq 1$ large enough to have
\begin{equation}
\label{ka}
(\beta-\alpha)s^2-2m_f\geq 0,\quad\forall \, |s|\geq k.
\end{equation}
Choosing then any smooth function
$\varrho:\R\to [0,1]$ satisfying $\varrho'(s)s\geq 0$ and
$$\varrho(s)=
\begin{cases}
0 & \text{if } |s|\leq k,\\
1 & \text{if } |s|\geq k+1,
\end{cases}
$$
define
$$
f_0(s)=\varrho(s)[f(s)+\beta s]
\and
f_1(s)=[1-\varrho(s)]f(s)-\beta\varrho(s)s.
$$
In light of \eqref{kkaa}-\eqref{ka},
it is immediate to check that
$f_0(s)s\geq 0$, implying in turn $F_0(s)\geq 0$.
We are left to prove the estimate $f_0(s)s\geq F_0(s)$.
We limit ourselves to discuss the case $s>0$, being the other one analogous.
If $s<k$, then $f_0(s)=F_0(s)=0$ by the very definition of $\varrho$.
If $s\geq k$, using again \eqref{kkaa}-\eqref{ka}
we infer that
$$f(y)+\beta y\geq 0,\quad\forall y\in[k,s].$$
Hence
\begin{align*}
F_0(s)&=\int_k^s\varrho(y)[f(y)+\beta y]\,\d y\\
&\leq
\varrho(s)\int_k^s[f(y)+\beta y]\,\d y\\
&=\varrho(s)\Big[F(s)+\frac{\beta}{2}s^2\Big]-\varrho(s)\Big[F(k)+\frac{\beta}{2}k^2\Big].
\end{align*}
Exploiting \eqref{DISS1}-\eqref{DISS2} and \eqref{ka}, we get
\begin{align*}
F_0(s)
&\leq\varrho(s)\Big[f(s)s+\frac{\alpha}{2}s^2+m_f+\frac{\beta}{2}s^2\Big]-\frac{\varrho(s)}{2}[(\beta-\alpha)k^2-2m_f]\\
&=f_0(s)s-\frac{\varrho(s)}{2}[(\beta-\alpha)s^2-2m_f]
-\frac{\varrho(s)}{2}[(\beta-\alpha)k^2-2m_f]
\leq f_0(s)s.
\end{align*}
This concludes the proof.
\end{proof}

Defining now
$$\sigma=\min\Big\{\frac13,\frac{4-\rho}{2}\Big\},$$
the following result holds.

\begin{lemma}
\label{Attr2}
For any $t\geq0$, there exists a closed bounded set ${\mathcal B}_\sigma(t)\subset\HH^{\sigma}$ such that
$$
\dist_{\HH}(S(t)\BB,{\mathcal B}_\sigma(t))\leq c\e^{-\omega t},
$$
for some constants $c\geq 0$ and $\omega>0$
depending only on $\BB$.
\end{lemma}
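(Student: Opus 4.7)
The plan is to establish Lemma~\ref{Attr2} via the standard splitting of the semigroup $S(t)$ into an exponentially decaying part carrying the initial data and a uniformly regularizing part, exploiting the decomposition $f = f_0 + f_1$ from Lemma~\ref{DECOfff}. Given $z = (u_0, v_0, \eta_0) \in \BB$ with $(u, \pt u, \eta^t) = S(t)z$, I would decompose $u = v + w$ and $\eta = \xi + \zeta$, where the pair $(v, \pt v, \xi)$ solves
$$
|\pt u|^\rho \ptt v + A\ptt v + A\pt v + Av + \int_0^\infty \mu(s) A\xi(s)\,\d s + f_1(v) = 0, \qquad \pt \xi = T\xi + \pt v,
$$
with initial data $(u_0, v_0, \eta_0)$, while $(w, \pt w, \zeta)$ solves
$$
|\pt u|^\rho \ptt w + A\ptt w + A\pt w + Aw + \int_0^\infty \mu(s) A\zeta(s)\,\d s + f(u) - f_1(v) = h, \qquad \pt \zeta = T\zeta + \pt w,
$$
with null initial data. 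Since the coefficient $|\pt u|^\rho$ is evaluated at the true solution $u$, adding the two systems reproduces the original problem, so $u = v + w$, $\eta = \xi + \zeta$ is a legitimate splitting.

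For the exponential decay of $(v, \pt v, \xi)$ in $\HH$, I would mirror the Lyapunov-type argument of Lemma~\ref{lemmaUNI-ABS}, constructing functionals analogous to $\Psi$ and $\Phi$ there but built from the $(v, \xi)$ components. Since $f_1$ is Lipschitz with $f_1(0) = 0$, no polynomial source term survives, and the perturbed energy closes to yield
$$
\|(v(t), \pt v(t), \xi^t)\|_\HH \leq \Q(R_\BB)\, \e^{-\omega t},
$$
where $R_\BB$ bounds $\|z\|_\HH$ on $\BB$. The cross term stemming from $|\pt u|^\rho \ptt v$ is absorbed as a lower-order perturbation, using the uniform bounds on $\|\pt u\|_1$ and $\|\ptt u\|_1$ supplied by Proposition~\ref{propppppy}.

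For the uniform $\HH^\sigma$-bound on $(w, \pt w, \zeta)$, I would test the $w$-equation with $A^\sigma \pt w$ in $\H$ and the $\zeta$-equation with $A^\sigma \zeta$ in $\M$, obtaining an $\HH^\sigma$-level energy identity. The forcing reduces to $h \in \H \subset \H^{-1+\sigma}$, the Lipschitz difference $f_1(v + w) - f_1(v)$, the critical piece $f_0(u)$, and the inertial cross term $|\pt u|^\rho \ptt w$. The Lipschitz remainder and $h$ are routine. The restriction $\sigma \leq 1/3$ arises from estimating $\|f_0(u)\|_{\H^{-1+\sigma}}$ via $\R^3$-Sobolev embeddings under the critical growth $|f_0(u)| \leq c|u|^5$, while $\sigma \leq (4-\rho)/2$ comes from a H\"older bound on the inertial cross term, whose relevant Sobolev pairings degenerate as $\rho \to 4$. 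Both constraints are met by $\sigma = \min\{1/3, (4-\rho)/2\}$, and a Gronwall inequality applied to the perturbed $\HH^\sigma$-energy yields $\|(w(t), \pt w(t), \zeta^t)\|_{\HH^\sigma} \leq \Q(R_\BB)$ uniformly in $t \geq 0$.

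Setting $\B_\sigma(t) = \overline{\{(w(t), \pt w(t), \zeta^t) : z \in \BB\}}^{\HH^\sigma}$, which is bounded in $\HH^\sigma$ by the previous step, the bound
$$
\dist_\HH(S(t)z, \B_\sigma(t)) \leq \|(v(t), \pt v(t), \xi^t)\|_\HH \leq \Q(R_\BB)\, \e^{-\omega t}
$$
closes the argument. The main obstacle I expect is the $\HH^\sigma$-regularity step: controlling $f_0(u)$ in $\H^{-1+\sigma}$ together with the coupling $|\pt u|^\rho \ptt w$ requires sharp Sobolev-H\"older bookkeeping, and it is precisely here that both the critical-growth restriction $\sigma \leq 1/3$ and the nonlinear-inertia restriction $\sigma \leq (4-\rho)/2$ are forced; this is also where the technical lemmas promised for the Appendix are likely invoked.
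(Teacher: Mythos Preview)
Your overall strategy (decaying part carrying the data, regularizing part with null data) is the right one, but the specific splitting reverses the roles of $f_0$ and $f_1$ relative to the paper, and this creates a genuine gap in the $\HH^\sigma$-estimate. In your $w$-system the forcing is $h-f(u)+f_1(v)$, so the full critical term $f_0(u)$ appears \emph{undifferenced}. Your claimed control of $\|f_0(u)\|_{\H^{-1+\sigma}}$ in fact fails for every $\sigma>0$: by duality one needs $f_0(u)\in L^{6/(5-2\sigma)}$, hence $u\in L^{30/(5-2\sigma)}$, whereas $u\in\H^1$ gives only $L^6$; this forces $\sigma\leq 0$, not $\sigma\leq 1/3$. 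The paper instead places $f_0(\hat v)$ in the \emph{decaying} system, which is then a self-contained problem of type~\eqref{SYSM} with $h=0$ and, thanks to $f_0(s)s\geq F_0(s)\geq 0$, with $m_f=0$; Lemma~\ref{lemmaUNI-ABS} then yields exponential decay directly. The regularizing system receives the \emph{difference} $f_0(u)-f_0(\hat v)$, bounded by $c|\hat w|(|u|+|\hat v|)^4$; the extra factor $\hat w\in\H^{1+\sigma}\subset L^{6/(1-2\sigma)}$ is precisely what makes the H\"older pairing with $A^\sigma\partial_t\hat w\in L^{6/(1+2\sigma)}$ close.

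The inertia is also split differently. The paper keeps the self-contained term $|\partial_t\hat v|^\rho\partial_{tt}\hat v$ in the decaying system (so Lemma~\ref{lemmaUNI-ABS} applies verbatim) and passes the difference $|\partial_t u|^\rho\partial_{tt}u-|\partial_t\hat v|^\rho\partial_{tt}\hat v$ to the $\hat w$-equation as a bounded forcing, estimated via Proposition~\ref{propppppy} together with a separate bound $\|\partial_{tt}\hat v\|_1\leq c$. Your frozen-coefficient term $|\partial_t u|^\rho\partial_{tt}v$ does not fit the Lyapunov structure of Lemma~\ref{lemmaUNI-ABS}: treated as a perturbation it contributes an $O(1)$ term (after bounding $\|\partial_{tt}v\|_1$), which obstructs exponential decay of the $v$-component to zero. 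Finally, note that the paper only proves $\hat E_\sigma(t)\leq \e^{ct}$ for the regularizing part, not a uniform-in-$t$ bound; this still suffices for the lemma since $\B_\sigma(t)$ is allowed to depend on $t$.
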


\begin{proof}
We write $f=f_0+f_1$ as in Lemma~\ref{DECOfff}.
For an arbitrarily fixed $z\in\BB$, let
$$(\hat v(t),\partial_t\hat v(t),\hat\xi^t)
\and
(\hat w(t),\partial_t\hat w(t),\hat\psi^t)$$
be the solutions at time $t>0$ to
the problems
\begin{equation}
\label{SYYSuno}
\begin{cases}
\displaystyle
|\partial_t\hat v|^\rho\partial_{tt}\hat v+A \partial_{tt}\hat v
+A\partial_t\hat v
+A\hat v+\int_0^\infty\mu(s)A\hat\xi(s)\,\d s+f_0(\hat v)=0,\\
\partial_t\hat\xi=T\hat\xi+\partial_t\hat v,\\
\noalign{\vskip1.5mm}
(\hat v(0),\partial_t\hat v(0),\hat \xi^0)=z,
\end{cases}
\end{equation}
and
\begin{equation}
\label{SYYSdue}
\begin{cases}
\displaystyle
|\partial_t u|^\rho \partial_{tt} u-|\partial_t\hat v|^\rho \partial_{tt}\hat v+A \partial_{tt}\hat w
+A\partial_t\hat w+A\hat w+\int_0^\infty\mu(s)A\hat \psi(s)\,\d s=g\\
\partial_t\hat\psi=T\hat\psi+\partial_t\hat w,\\
\noalign{\vskip1.5mm}
(\hat w(0),\partial_t\hat w(0),\hat\psi^0)=0,
\end{cases}
\end{equation}
having set
$$g=h-f_0(u)+f_0(\hat v)-f_1(u).$$
In what follows, the generic constant $c\geq 0$
is independent of the
choice of $z\in\BB$.

\smallskip
Concerning system \eqref{SYYSuno},
since the forcing term is null and
$f_0(v)v\geq F_0(v)\geq 0$, an application of
Lemma~\ref{lemmaUNI-ABS} yields the
exponential decay
\begin{equation}
\label{estimate_hatv}
\big\|\big(\hat v(t),\partial_t\hat v(t),\hat\xi^t\big)\big\|_\HH\leq c\|z\|_{\HH}\e^{-\omega t},
\end{equation}
for some $c\geq 0$ and $\omega>0$, depending only on $\BB$.
Furthermore, a multiplication of the first equation of \eqref{SYYSuno} by $\partial_{tt} \hat v$  gives
\begin{align*}
\|\partial_{tt} \hat v\|^2_1&\leq\l|\partial_t \hat v|^\rho \partial_{tt} \hat v,\partial_{tt} \hat v\r+\|\partial_{tt} \hat v\|^2_1\\
&=-\l \partial_t \hat v,\partial_{tt} \hat v\r_1-\l \hat v,\partial_{tt} \hat v\r_1
-\int_0^\infty\mu(s)\l\hat \xi(s),\partial_{tt} \hat v\r_1\,\d s-\l f_0(\hat v),\partial_{tt} \hat v\r.
\end{align*}
By the growth assumption on $f_0$,
$$-\l f_0(\hat v),\partial_{tt} \hat v\r
\leq \|f_0(\hat v)\|_{L^{6/5}}\|\partial_{tt}\hat v\|_{L^6}\leq c\big(1+\|\hat v\|^5_1\big)\|\partial_{tt}\hat v\|_1.$$
Moreover,
$$-\int_0^\infty\mu(s)\l\hat \xi(s),\partial_{tt} \hat v\r_1\,\d s\leq
\|\partial_{tt}\hat v\|_1\int_0^\infty\mu(s)\|\hat \xi(s)\|_1\,\d s,
$$
and
$$\int_0^\infty\mu(s)\|\hat \xi(s)\|_1\,\d s
\leq\sqrt{\kappa}\,\|\hat \xi\|_\M.
$$
Thus, we infer from \eqref{estimate_hatv} that
$$
\|\partial_{tt} \hat v\|^2_1
\leq \big(\|\partial_t \hat v\|_{1}+\|\hat v\|_1
+\sqrt{\kappa}\|\hat \xi\|_\M+c+c\|\hat v\|^5_1\big)\|\partial_{tt} \hat v\|_1\leq \frac12\|\partial_{tt} \hat v\|^2_1+c
$$
implying the bound
\begin{equation}
\label{est2_hatv}
\|\partial_{tt} \hat v\|_1\leq c.
\end{equation}

\smallskip
Concerning system \eqref{SYYSdue},
introducing the energy
$$\hat E_\sigma(t)=\frac12\big\|\big(\hat w(t),\partial_t\hat w(t),\hat\psi^t\big)\big\|_{\HH^\sigma}^2,$$
we want to prove the estimate
\begin{equation}
\label{estimate_hatw}
\hat E_\sigma(t)\leq\e^{ct}.
\end{equation}
To this aim, we multiply
the first equation of \eqref{SYYSdue}
by $A^\sigma\partial_t\hat w$,
and the second one by $\hat\psi$ in $\M^\sigma$, so to get
$$\ddt \hat E_\sigma+\|\partial_t\hat w\|^2_{1+\sigma}\leq
\l-|\partial_t u|^\rho\partial_{tt}u+|\partial_t\hat v|^\rho\partial_{tt}\hat v,A^\sigma\partial_t\hat w\r
+\l g,A^\sigma\partial_t\hat w\r.
$$
Observe that
\begin{align*}
|g|=|h-f_0(u)+f_0(\hat v)-f_1(u)|
\leq |h|+c|\hat w|(|u|+|\hat v|)^4+c(1+|u|).
\end{align*}
Thus, the Sobolev embeddings
$$\H^{1+\sigma}\subset L^{\frac{6}{1-2\sigma}}(\Omega)\and \H^{1-\sigma}\subset
L^{\frac{6}{1+2\sigma}}(\Omega)$$
yield
\begin{align}
\label{hatg}
\l g,A^{\sigma}\partial_t\hat w\r\leq&
\|h\|\|A^\sigma\partial_t\hat w\|
+c(\|u\|_{L^6}+\|\hat v\|_{L^6})^4\|\hat w\|_{L^{6/(1-2\sigma)}}
\|A^{\sigma}\partial_t\hat w\|_{L^{6/(1+2\sigma)}}\\
\nonumber
&+c(1+\|u\|)\|A^\sigma\partial_t\hat w\|\\
\nonumber
\leq &c\|\partial_t\hat w\|_{1+\sigma}+c(\|u\|_1+\|\hat v\|_1)^4\|\hat w\|_{1+\sigma}
\|\partial_t\hat w\|_{1+\sigma}\\
\nonumber
&+c(1+\|u\|_1)\|\partial_t\hat w\|_{1+\sigma}\\
\leq & c \hat E_\sigma+c.
\nonumber
\end{align}
Besides,
since $\frac{3\rho}{2-\sigma}\leq 6$, from the embedding $\H^1\subset L^{\frac{3\rho}{2-\sigma}}(\Omega)$ we find
$$
\l-|\partial_t u|^\rho\partial_{tt}u,A^{\sigma}\partial_t\hat w\r
\leq
\|\partial_t u\|^\rho_{L^{3\rho/(2-\sigma)}}\|\partial_{tt}u\|_{L^6}
\|A^{\sigma}\partial_t\hat w\|_{L^{6/(1+2\sigma)}}
\leq\|\partial_t u\|_{1}^\rho\|\partial_{tt}u\|_{1}
\|\partial_t\hat w\|_{1+\sigma}
$$
and, analogously,
$$\l|\partial_t\hat v|^\rho\partial_{tt}\hat v,A^{\sigma}\partial_t\hat w\r\leq
\|\partial_t \hat v\|_{1}^\rho\|\partial_{tt}\hat v\|_{1}
\|\partial_t\hat w\|_{1+\sigma}.$$
Therefore, in light of  Proposition~\ref{propppppy}, \eqref{estimate_hatv} and \eqref{est2_hatv}, we have
$$
\l-|\partial_t u|^\rho\partial_{tt}u+|\partial_t\hat v|^\rho\partial_{tt}\hat v,A^{\sigma}\partial_t\hat w\r
\leq \|\partial_t\hat w\|_{1+\sigma}^2+c.
$$
Collecting the above inequalities we arrive at
$$
\ddt \hat E_\sigma\leq c \hat E_\sigma+c.
$$
Recalling that $\hat E_\sigma(0)=0$, by
the Gronwall lemma we obtain the sought inequality \eqref{estimate_hatw}.
This finishes the proof.
\end{proof}

Lemma~\ref{Attr2} is not quite enough to conclude, since the embedding $\HH^\sigma\subset\HH$
is not compact (see \cite{PZ}). Hence, a further argument is needed.

\begin{proof}[Proof of Proposition \ref{Attr3}]
In the previous notation, for any $z\in\BB$ and
any fixed $t\geq 0$ we set
$$\Xi_t=\bigcup_{z\in\BB} \hat\psi^t.$$
Exploiting the representation formula for $\hat\psi^t$
$$
\hat\psi^t(s)
=\begin{cases}
\hat w(t)-\hat w(t-s) & 0<s\leq t,\\
\noalign{\vskip1mm}
\hat w(t) & s>t,
\end{cases}
$$
and taking into account that $\partial_t \hat  w\in L^\infty(0,\infty;\H^1)$,
we learn that $\Xi_t\subset\D(T)$, and by elementary computations we obtain
$$\sup_{z\in\BB}\|T\hat\psi^t\|_{\M}\leq c
\qquad\text{and}\qquad\sup_{z\in\BB}\|\hat\psi^t(s)\|_1^2\leq c.$$
Besides, by \eqref{estimate_hatw},
$$\sup_{z\in\BB}\| \hat\psi^t\|_{\M^{\sigma}}\leq\Q(t).$$
Since
$$s\mapsto c\mu(s)\in L^1(\R^+),$$
we infer from Lemma~\ref{lemmaPZ} that $\Xi_t$ is precompact in $\M$.
At this point, exploiting \eqref{estimate_hatw} again,
let $\B(t)$ be the closed ball of $\H^{1+\sigma}\times \H^{1+\sigma}$
centered at zero of a suitable radius $\Q(t)$ such that
$$\sup_{z\in\BB}\|\big(\hat w(t),\partial_t\hat w(t)\big)\big\|_{\H^{1+\sigma}\times \H^{1+\sigma}}\leq\Q(t).
$$
Finally, define
$$\K(t)=\B(t)\times\overline\Xi_t,$$
the bar standing for the closure in $\M$. Then
$\K(t)$ is compact in $\HH$ and fulfills the claim.
\end{proof}

\begin{remark}
Actually, relying on the gradient system structure of $S(t)$ provided by Proposition~\ref{GS},
one could prove the existence of the global attractor without passing through the
existence of a bounded absorbing set, which is then recovered as a byproduct (see e.g.\ \cite{visco, HAL}).
The disadvantage of this scheme is that it does not provide any estimate of the entering time
into the absorbing set.
\end{remark}

\section{Further Regularity}
\label{SecFR}

\begin{proposition}
\label{AttrH13}
The {\it global attractor} $\A$ is bounded in $\HH^{\sigma}$.
\end{proposition}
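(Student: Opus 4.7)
The plan is to combine the splitting of Lemma~\ref{Attr2} with the invariance $\A=S(t)\A$. Given $z\in\A$ and $t>0$, pick $z_t\in\A$ with $S(t)z_t=z$ and write $S(\tau)z_t=\zeta_v(\tau;z_t)+\zeta_w(\tau;z_t)$ on $[0,t]$ as in that lemma. The $\HH$-decay of $\zeta_v$, with constants uniform over $z_t\in\A\subset\BB$, is already contained in \eqref{estimate_hatv}. If one can show that $\|\zeta_w(t;z_t)\|_{\HH^\sigma}\leq C$ uniformly in $t\geq 0$ and in $z_t\in\A$, then choosing any sequence $t_n\to\infty$ and extracting a subsequence $\zeta_w(t_n;z_{t_n})\rightharpoonup z^\star$ weakly in $\HH^\sigma$, the strong convergence $\zeta_v(t_n;z_{t_n})\to 0$ in $\HH$ together with the identity $z=\zeta_v+\zeta_w$ forces $z^\star=z$; by weak lower semicontinuity of the norm, $z\in\HH^\sigma$ with $\|z\|_{\HH^\sigma}\leq C$, which is the conclusion.

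The heart of the proof is thus to upgrade the growth estimate $\hat E_\sigma(\tau)\leq\e^{c\tau}$ of Lemma~\ref{Attr2} to a dissipative inequality of the form $\ddt\hat{\mathcal E}_\sigma+\gamma\hat{\mathcal E}_\sigma\leq c$. Following the pattern of Lemma~\ref{lemmaUNI-ABS}, I would work with an augmented energy
$$\hat{\mathcal E}_\sigma=\hat E_\sigma+\eps\hat\Psi_\sigma+\delta\hat\Phi_\sigma, \qquad \hat\Psi_\sigma=\int_0^\infty\Big(\int_s^\infty\mu(y)\,\d y\Big)\|\hat\psi(s)\|_{1+\sigma}^2\,\d s, \qquad \hat\Phi_\sigma=\frac12\|\hat w\|_{1+\sigma}^2+\l\pt\hat w,\hat w\r_{1+\sigma},$$
and differentiate along \eqref{SYYSdue}. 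The ingredients for the closure are: the uniform-in-time bounds $\|u\|_1,\|\pt u\|_1,\|\ptt u\|_1\leq c$ on the attractor from Proposition~\ref{propppppy} with $R=R_\A$; the exponential decay of $\zeta_v$ in $\HH$, which renders the $\hat v$-dependent coupling terms asymptotically negligible; condition \eqref{NEC1}, needed to handle the memory contributions through $\hat\Psi_\sigma$ exactly as in Lemma~\ref{lemmaUNI-ABS}; and the critical Sobolev embeddings already used in Lemma~\ref{Attr2}, valid precisely for $\sigma\leq\min\{\tfrac13,(4-\rho)/2\}$. Since $\hat{\mathcal E}_\sigma(0)=0$, Gronwall would then give the sought uniform bound $\hat E_\sigma(\tau)\leq C$.

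The delicate point will be the critical contribution of the nonlinearity,
$$\l f_0(u)-f_0(\hat v),A^\sigma\pt\hat w\r\leq c\big(\|u\|_1+\|\hat v\|_1\big)^4\|\hat w\|_{1+\sigma}\|\pt\hat w\|_{1+\sigma},$$
whose prefactor is merely bounded on $\A$ and not small: this is exactly what prevents Lemma~\ref{Attr2} from being dissipative as it stands. The strategy to handle it is to write $u=\hat v+\hat w$ so that the prefactor splits as $c(\|\hat v\|_1^4+\|\hat w\|_1^4)$; the $\|\hat v\|_1^4$-piece is made negligible by the exponential decay of $\zeta_v$ from \eqref{estimate_hatv} (and can be absorbed through a Gronwall integration), while the $\|\hat w\|_1^4$-piece, uniformly bounded by the $\HH$-diameter of $\A$, is to be compensated by the coercive $\|\hat w\|_{1+\sigma}^2$ term produced by $\delta\hat\Phi_\sigma$ after a careful calibration of $\eps$ and $\delta$ depending on $\A$. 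The remaining term $|\pt u|^\rho\ptt u-|\pt\hat v|^\rho\ptt\hat v$ is treated as in Lemma~\ref{Attr2} via Proposition~\ref{propppppy} and the embedding $\H^1\subset L^{3\rho/(2-\sigma)}$, which is what forces the restriction $\sigma\leq(4-\rho)/2$.
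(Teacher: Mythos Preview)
Your overall strategy---use the invariance $\A=S(t)\A$, split along the attractor, show a uniform-in-time $\HH^\sigma$ bound on $\zeta_w$, and conclude via weak compactness---is sound, and your identification of the critical nonlinear term as the obstacle is exactly right. The gap is in the proposed resolution of that term.

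After your splitting $(\|u\|_1+\|\hat v\|_1)^4\leq c\|\hat v\|_1^4+c\|\hat w\|_1^4$, the piece $c\|\hat v\|_1^4\|\hat w\|_{1+\sigma}\|\partial_t\hat w\|_{1+\sigma}\leq c\e^{-4\omega\tau}\hat E_\sigma$ is indeed harmless by Gronwall. But the second piece gives $c\|\hat w\|_1^4\|\hat w\|_{1+\sigma}\|\partial_t\hat w\|_{1+\sigma}$, and since $\|\hat w\|_1\leq c$ is merely bounded on $\A$ (not small), any Young inequality leaves either a term $C\|\partial_t\hat w\|_{1+\sigma}^2$ with $C$ comparable to the unit damping coefficient, or a term $C\|\hat w\|_{1+\sigma}^2$ with $C$ of order one. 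The coercive contribution from $\delta\hat\Phi_\sigma$ is only $\tfrac{\delta}{2}\|\hat w\|_{1+\sigma}^2$, and $\delta$ is structurally forced to be small (both for the equivalence $\hat{\mathcal E}_\sigma\approx\hat E_\sigma$ and so that the $+\delta\|\partial_t\hat w\|_{1+\sigma}^2$ generated by $\hat\Phi_\sigma$ does not destroy the damping). No calibration of $\eps,\delta$ depending on $\A$ can make $\delta$ dominate a constant of order one. You end up back at $\ddt\hat{\mathcal E}_\sigma\leq C\hat{\mathcal E}_\sigma+c$, which is exactly the exponential growth of Lemma~\ref{Attr2}, not a uniform bound.

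The paper circumvents this by a genuinely different mechanism: it invokes the abstract iteration scheme of Lemma~\ref{teo_win}, introducing a \emph{second} decomposition $u=v+w$ (with initial data $y\in\BB$ and $x\in\HH^\sigma$) alongside the original $u=\hat v+\hat w$ of Lemma~\ref{Attr2}. The critical prefactor is then split as $(|u|+|v|)^4\leq c(|\hat v|+|v|)^4+c|\hat w|^4$. Both $v$ and $\hat v$ decay exponentially, so the first piece yields $c\e^{-4\omega t}E_\sigma$. For the second piece one uses $|w|\leq|u|+|v|$ to \emph{decouple} it from the unknown $(w,\partial_t w)$: after Young it becomes $\tfrac{\delta}{8}\|\partial_t w\|_{1+\sigma}^2+c\|\hat w\|_{1+\sigma}^8$, and the last term---bounded by $\e^{ct}$ via \eqref{estimate_hatw}---is absorbed into the additive $\Q(t)$ allowed in condition~(iii) of Lemma~\ref{teo_win}. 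This decoupling is precisely what is unavailable when you work with a single decomposition; within your scheme the dangerous term remains quadratic in the $\HH^\sigma$-energy and cannot be reduced to an additive source.
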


\begin{proof}
The global attractor $\A$, being fully invariant, is contained in every closed attracting set.
Hence, to prove the lemma it is enough to exhibit a (closed) ball
$\BB_\sigma\subset \HH^\sigma$ which
attracts the bounded absorbing set $\BB$.
Indeed, by applying
Lemma~\ref{teo_win} with $r=\sigma$, we will show that
$$
\dist_{\HH}(S(t)\BB,\BB_\sigma)\leq c\e^{-\varkappa t},
$$
for some $\varkappa>0$.
To this end, let
$z\in\BB$ be fixed, and let $y\in\BB$ and $x\in\HH^{\sigma}$ be any pair satisfying $y+x=z$.
We define the operators
$$V_{z}(t)y=(v(t),\partial_t v(t),\xi^t)\qquad\text{ and }\qquad U_{z}(t)x=(w(t),\partial_tw(t),\psi^t),$$
where $(v(t),\partial_t v(t),\xi^t)$ and $(w(t),\partial_t w(t),\psi^t)$ solve systems \eqref{SYYSuno} and
\eqref{SYYSdue} without the {\it hats}, with initial data
$$(v(0),\partial_t v(0),\xi^0)=y\and (w(0),\partial_t w(0),\psi^0)=x.$$

\smallskip
Condition (i) of Lemma~\ref{teo_win}
holds by construction, while (ii) follows by the exponential decay \eqref{estimate_hatv},
which now reads
\begin{equation}
\label{estimatevvvv}
\|(v(t),\partial_t v(t),\xi^t)\|_{\HH}=\|V_z(t)y\|_{\HH}\leq c\|y\|_\HH\e^{-\omega t}.
\end{equation}
Arguing as in the proof of \eqref{est2_hatv} we also get
\begin{equation}
\label{est2v}
\|\partial_{tt} v\|_1\leq c.
\end{equation}
In order to prove (iii), we set
$$E_\sigma(t)=\frac12\|U_{z}(t)x\|_{\HH^\sigma}^2.$$
An application of Lemma \ref{Abound_SYS}
provides a functional
$\Lambda_{\sigma}$
satisfying
\begin{equation}
\label{eqis}
\frac12E_\sigma\leq \Lambda_\sigma\leq 2E_\sigma,
\end{equation}
jointly with the differential inequality
$$
\ddt\Lambda_\sigma+\delta E_\sigma
\leq\l \gamma,\partial_tw\r_\sigma+\delta\l \gamma,w\r_\sigma,
$$
which holds for all $\delta>0$ small enough.
Here, $\gamma$ is defined by
$$\gamma=g-|\partial_t u|^\rho\partial_{tt}u+|\partial_t v|^\rho\partial_{tt}v$$
where
$$g=h-f_0(u)+f_0(v)-f_1(u).$$
We estimate the nonlinearity $g$ as follows: we write
\begin{align*}
|g|&\leq |h|+c|w|(|u|+|v|)^4+c(1+|u|)\\
&\leq|h|+c|w|(|\hat v|+|v|)^4+c(|u|+|v|)|\hat w|^4+c(1+|u|)
\end{align*}
and, with analogous computations as in \eqref{hatg}, we obtain
\begin{align*}
\l g,A^{\sigma} \partial_t w\r\leq&
\|h\|\|A^{\sigma} \partial_t w\|+c(\|v\|_{L^6}+\|\hat v\|_{L^6})^4\|w\|_{L^{6/(1-2\sigma)}}
\|A^{\sigma}\partial_t w\|_{L^{6/(1+2\sigma)}}\\
&+c(\|u\|_{L^6}+\|v\|_{L^6})\|\hat w\|_{L^{6/(1-2\sigma)}}^4\|A^{\sigma} \partial_t w\|_{L^{6/(1+2\sigma)}}
+c(1+\|u\|)\|A^\sigma \partial_t  w\|\\
\leq & c\|\partial_t w\|_{1+\sigma}+
c(\|v\|_1+\|\hat v\|_1)^4\|w\|_{1+\sigma}\|\partial_t w\|_{1+\sigma}\\
&+c(\|u\|_1+\|v\|_1)\|\hat w\|_{1+\sigma}^4\|\partial_t w\|_{1+\sigma}+c(1+\|u\|_1)\|\partial_t w\|_{1+\sigma}.
\end{align*}
Exploiting the decay estimates \eqref{estimate_hatv} and \eqref{estimatevvvv}
together
with \eqref{estimate_hatw}, we arrive at
\begin{equation*}
\l g(t),A^\sigma\partial_t w(t)\r\leq \frac{\delta}{8}\|\partial_t w(t)\|_{1+\sigma}^2+
c\e^{-4\omega t}\big[\|w(t)\|_{1+\sigma}^2+\|\partial_t w(t)\|_{1+\sigma}^2\big]+\Q(t),
\end{equation*}
for some $\Q(\cdot)$ independent of $x$.
Besides, arguing exactly as in the proof of Lemma~\ref{Attr2},
$$
\l-|\partial_t u|^\rho\partial_{tt}u+|\partial_t v|^\rho\partial_{tt}v,A^{\sigma}\partial_t w\r
\leq\frac{\delta}{8}\|\partial_tw\|_{1+\sigma}^2+c.
$$
where we used Proposition \ref{propppppy}, \eqref{estimatevvvv} and \eqref{est2v}.
By analogous computations, we draw
$$
\l \gamma(t),A^\sigma w(t)\r\leq\Big(\frac{\delta}{4}+c\e^{-4\omega t}\Big)\|w(t)\|_{1+\sigma}^2+\Q(t),
$$
for some $\Q(\cdot)$ independent of $x$.
We finally  end up with the differential inequality
\begin{align*}
\ddt\Lambda_\sigma(t)+\frac{\delta}{2} E_\sigma(t)\leq
c\e^{-4\omega t}E_\sigma(t)+\Q(t).
\end{align*}
In light of \eqref{eqis}, we infer from the Gronwall lemma that
$$\|U_z(t)x\|_{\HH^\sigma}\leq c\e^{-\frac{\delta t}{8}}\|x\|_{\HH^\sigma}+\Q(t).$$
This proves (iii).
\end{proof}

A further regularization for $\partial_{tt} u$ will be needed.

\begin{lemma}
\label{tutt}
For initial data $z\in \A$ we have
$$
\|\partial_{tt} u\|_{1+\sigma}\leq c
$$
for some $c>0$ depending only on $\A$.
\end{lemma}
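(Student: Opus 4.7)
The plan is to solve the first equation of~\eqref{SYSM} algebraically for $A\partial_{tt} u$,
$$A\partial_{tt} u = h-f(u)-Au-A\pt u-\int_0^\infty\mu(s)A\eta(s)\,\d s-|\pt u|^\rho\partial_{tt} u,$$
and then bound the right-hand side in $\H^{-1+\sigma}$. Since $A$ acts as an isometry from $\H^{1+\sigma}$ onto $\H^{-1+\sigma}$, this immediately delivers the target estimate $\|\partial_{tt} u\|_{1+\sigma}\leq c$. For $z\in\A$ all the required inputs are already on the table: Proposition~\ref{AttrH13} yields $\|u\|_{1+\sigma}+\|\pt u\|_{1+\sigma}+\|\eta\|_{\M^\sigma}\leq c$, while Proposition~\ref{propppppy} yields $\|\partial_{tt} u\|_1\leq c$, so there is no circularity in reintroducing $\partial_{tt} u$ on the right.

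The four ``linear'' contributions are one-liners. Since $\sigma\leq 1$ and $h\in\H$, $\|h\|_{-1+\sigma}\leq c\|h\|$; the identities $\|Au\|_{-1+\sigma}=\|u\|_{1+\sigma}$ and $\|A\pt u\|_{-1+\sigma}=\|\pt u\|_{1+\sigma}$ plug directly into Proposition~\ref{AttrH13}; and the memory integral is estimated by $\sqrt{\kappa}\,\|\eta\|_{\M^\sigma}$ via H\"older in $s$.

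For the nonlinearity $f(u)$, I would combine the critical growth~\eqref{GROW} with the Sobolev embedding $\H^{1+\sigma}\hookrightarrow L^{6/(1-2\sigma)}$ to obtain $f(u)\in L^{p}$ with $p=6/[5(1-2\sigma)]$, and then invoke the dual embedding $L^{p}\hookrightarrow\H^{-1+\sigma}$ (which follows from $\H^{1-\sigma}\hookrightarrow L^{6/(1+2\sigma)}$, and holds for every $\sigma\geq 0$). This gives $\|f(u)\|_{-1+\sigma}\leq c$ uniformly on $\A$.

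The crux is the inertial term $|\pt u|^\rho\partial_{tt} u$. The natural H\"older estimate
$$\bigl\||\pt u|^\rho\partial_{tt} u\bigr\|_{L^p}\leq\|\pt u\|_{L^{6/(1-2\sigma)}}^\rho\,\|\partial_{tt} u\|_{L^6},\qquad\frac{1}{p}=\frac{\rho(1-2\sigma)+1}{6},$$
is bounded by a constant once again thanks to the attractor regularity and Proposition~\ref{propppppy}. The real obstacle is to check that $L^p\hookrightarrow\H^{-1+\sigma}$, which after simplification boils down to the numerical inequality $\rho\leq(4-2\sigma)/(1-2\sigma)$. A short case analysis against the definition $\sigma=\min\{1/3,(4-\rho)/2\}$ shows that this holds trivially when $\sigma=1/3$ (where it reduces to $\rho\leq 10$) and is precisely saturated in the limit $\rho\to 4$ when $\sigma=(4-\rho)/2$; in both regimes the inequality is strict for every $\rho<4$. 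This is exactly where the assumption $\rho<4$ enters in a sharp way, and it is also the reason for the particular choice of the exponent $\sigma$.
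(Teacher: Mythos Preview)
Your proposal is correct and is essentially the dual formulation of the paper's argument: the paper tests the first equation of~\eqref{SYSM} with $A^\sigma\partial_{tt}u$ and bounds each resulting pairing, which is exactly the same as bounding each term in $\H^{-1+\sigma}$ and then invoking the isometry $A:\H^{1+\sigma}\to\H^{-1+\sigma}$. The one cosmetic difference lies in the H\"older split of the inertial term: the paper places $\partial_t u$ only in $L^{3\rho/(2-\sigma)}$ via $\H^1\subset L^{3\rho/(2-\sigma)}$ (using nothing beyond Proposition~\ref{propppppy}), which produces the tight constraint $\rho\leq 4-2\sigma$, whereas you use the extra room afforded by $\partial_t u\in\H^{1+\sigma}$ from Proposition~\ref{AttrH13}, yielding the looser constraint $\rho\leq(4-2\sigma)/(1-2\sigma)$; both are satisfied by the chosen $\sigma$, and both saturate only as $\rho\to4$.
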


\begin{proof}
A multiplication of the first equation of \eqref{SYSM} by $A^{\sigma}\partial_{tt} u$  gives
\begin{align}
\label{crr}
\|\partial_{tt} u\|^2_{1+\sigma} & \leq -\l|\partial_t u|^\rho \partial_{tt} u,A^\sigma\partial_{tt} u\r
-\l \partial_t u,\partial_{tt} u\r_{1+\sigma}-\l u,\partial_{tt} u\r_{1+\sigma}\\
\nonumber
&\qquad -\int_0^\infty\mu(s)\l\eta(s),\partial_{tt} u\r_{1+\sigma}\,\d s-\l f(u),A^\sigma\partial_{tt} u\r
+\l h,A^\sigma\partial_{tt} u\r.
\end{align}
In order to estimate the terms in the right--hand side,
we exploit the bound
$$\|(u,\partial_t u,\eta)\|_{\HH^{\sigma}}\leq c.$$
Note first that
\begin{align*}
-\l|\partial_t u|^\rho\partial_{tt}u,A^{\sigma}\partial_{tt} u\r
&\leq
\|\partial_t u\|_{L^{3\rho/(2-\sigma)}}^\rho\|\partial_{tt}u\|_{L^{6}}
\|A^{\sigma}\partial_{tt} u\|_{L^{6/(1+2\sigma)}}\\
&\leq
\|\partial_t u\|_{1}^\rho\|\partial_{tt}u\|_{1}
\|\partial_{tt} u\|_{1+\sigma}\\
&\leq c \|\partial_{tt} u\|_{1+\sigma}.
\end{align*}
Moreover, in light of \eqref{GROW},
\begin{align*}
-\l f(u),A^\sigma\partial_{tt} u\r
&\leq \|f(u)\|_{L^{6/(5-2\sigma)}}\|A^\sigma\partial_{tt} u\|_{L^{6/(1+2\sigma)}}\\
&\leq
c\big(1+\|u\|^5_{1+\sigma}\big)\|\partial_{tt}u\|_{1+\sigma}\\
&\leq c\|\partial_{tt}u\|_{1+\sigma},
\end{align*}
and
$$-\int_0^\infty\mu(s)\l\eta(s),\partial_{tt} u\r_{1+\sigma}\,\d s\leq
\sqrt{\kappa}\,\|\eta\|_{\M^\sigma}\|\partial_{tt}u\|_{1+\sigma}.
$$
As a consequence, \eqref{crr} gives
\begin{align*}
\|\partial_{tt}u\|_{1+\sigma}^2
&\leq \big(\|\partial_t u\|_{1+\sigma}+\|u\|_{1+\sigma}
+\sqrt{\kappa}\|\eta\|_{\M^\sigma}+c+\|h\|_{L^{6/(5-2\sigma)}}\big)\|\partial_{tt} u\|_{1+\sigma}\\
&\leq \frac12\|\partial_{tt} u\|^2_{1+\sigma}+c,
\end{align*}
concluding the proof.
\end{proof}

\section{Optimal Regularity of the Attractor}
\label{SecORA}

\noindent
In this section we prove the optimal regularity of the attractor.
The key ingredient is the following lemma.

\begin{lemma}
\label{keylemma}
Given any $r\in[\sigma,1-\sigma]$ the following holds:
$$\A\subset \HH^r\quad\Rightarrow\quad\A\subset \HH^{r+\sigma}.$$
\end{lemma}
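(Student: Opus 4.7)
The plan is to run a one-step bootstrap modeled directly on the proof of Proposition~\ref{AttrH13}, but upgraded to the regularity level $r+\sigma$. The hypothesis $\A\subset\HH^r$ will replace the a priori bound in $\HH$ used there, and the bound $r\leq 1-\sigma$ is precisely what keeps all Sobolev exponents meaningful.

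First, by the full invariance of $\A$, for any fixed $z\in\A$ and any $t\geq 0$ we can pick $z_t\in\A$ with $S(t)z_t=z$, and the hypothesis provides a uniform bound $\|z_t\|_{\HH^r}\leq M$ depending only on $\A$. Then, exactly as in Section~\ref{SecEGA}, split
$$
S(\tau)z_t=V_{z_t}(\tau)z_t+U_{z_t}(\tau)0,
$$
where $V_{z_t}(\tau)z_t=(v,\pt v,\xi^\tau)$ solves \eqref{SYYSuno} with datum $z_t$ and $U_{z_t}(\tau)0=(w,\pt w,\psi^\tau)$ solves \eqref{SYYSdue} with datum $0$. From \eqref{estimatevvvv} the first summand satisfies $\|V_{z_t}(t)z_t\|_\HH\leq c\e^{-\omega t}$, while a standard energy estimate for \eqref{SYYSuno} at level $r$ (testing with $A^r\pt v$ and $\xi$ in $\M^r$, exploiting $f_0(v)v\geq F_0(v)\geq 0$) shows that $(v,\pt v,\xi^\tau)$ remains bounded in $\HH^r$ uniformly in $\tau\geq0$ and $z_t\in\A$; similarly, an upgrade of \eqref{est2_hatv} gives $\|\ptt v\|_{1+r}\leq c$ and, by the same scheme that proves Lemma~\ref{tutt}, the attractor-level bound $\|\ptt u\|_{1+r}\leq c$ is also available because the complete trajectory through $z$ stays in $\A\subset\HH^r$.

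The core technical step is to show
$$
\|U_{z_t}(t)0\|_{\HH^{r+\sigma}}\leq K
$$
uniformly in $t\geq 0$ and $z_t\in\A$, with $K$ depending only on $\A$. This is obtained by testing \eqref{SYYSdue} with $A^{r+\sigma}\pt w$ and $\psi$ in $\M^{r+\sigma}$, producing via Lemma~\ref{Abound_SYS} a functional $\Lambda_{r+\sigma}\asymp E_{r+\sigma}=\tfrac12\|U_{z_t}(\tau)0\|_{\HH^{r+\sigma}}^2$ with a differential inequality of the form
$$
\ddt\Lambda_{r+\sigma}+\delta E_{r+\sigma}\leq\l \gamma,\pt w\r_{r+\sigma}+\delta\l\gamma,w\r_{r+\sigma},
$$
where $\gamma=g-|\pt u|^\rho\ptt u+|\pt v|^\rho\ptt v$ and $g=h-f_0(u)+f_0(v)-f_1(u)$. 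The nonlinear piece $f_0(u)-f_0(v)$ is handled through the Sobolev embeddings $\H^{1+r+\sigma}\subset L^{6/(1-2(r+\sigma))}$ (or $L^\infty$ when $r+\sigma\geq 1/2$) and $\H^{1-\sigma}\subset L^{6/(1+2\sigma)}$, with $u,v$ controlled in $\H^{1+r}$ and $w=u-v$ controlled in $\H^{1+r+\sigma}$; since $r+\sigma\leq 1$, these exponents are admissible and the fifth-order growth of $f_0$ is absorbed exactly as in \eqref{hatg}, up to an additive $c\e^{-\omega\tau}E_{r+\sigma}+\Q(\tau)$ coming from the decay of $v$ and the slow growth of the inhomogeneous terms. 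The $|\pt u|^\rho$ term is treated as at the end of Section~\ref{SecFR}, using the embedding $\H^1\subset L^{3\rho/(2-\sigma)}$ (valid thanks to $\sigma\leq(4-\rho)/2$) together with $\|\ptt u\|_{1+r}\leq c$ and the analogous bound for $v$.

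Putting these estimates together, the Gronwall lemma (absorbing the exponentially small coefficient $c\e^{-\omega\tau}$ into $\delta$) yields the claimed uniform bound on $U_{z_t}(t)0$ in $\HH^{r+\sigma}$. To conclude, write
$$
z=U_{z_t}(t)0+V_{z_t}(t)z_t,
$$
note $\|V_{z_t}(t)z_t\|_\HH\to 0$ as $t\to\infty$, and extract a weakly convergent subsequence from the bounded family $\{U_{z_t}(t)0\}\subset \HH^{r+\sigma}$; since $\HH^{r+\sigma}$ embeds continuously in $\HH$, the weak limit must coincide with $z$, giving $z\in\HH^{r+\sigma}$ with $\|z\|_{\HH^{r+\sigma}}\leq K$. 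The main obstacle is the estimate on $\l g,A^{r+\sigma}\pt w\r$: balancing the critical growth of $f_0$ against the available Sobolev embeddings requires the full interval constraint $r\in[\sigma,1-\sigma]$ and careful use of the $\HH^r$-bound on $u,v$; everything else is a straightforward adaptation of the arguments already developed.
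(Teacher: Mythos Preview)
Your plan reuses the nonlinear splitting \eqref{SYYSuno}--\eqref{SYYSdue} and hinges on the claim that a ``standard energy estimate for \eqref{SYYSuno} at level $r$'' yields a uniform $\HH^r$-bound on $(v,\partial_t v,\xi)$. This step does not go through as stated. The structural identities that make the level-$0$ estimate work, namely
\[
\l|\partial_t v|^\rho\partial_{tt}v,\partial_t v\r=\frac{1}{\rho+2}\ddt\int_\Omega|\partial_t v|^{\rho+2}
\and
\l f_0(v),\partial_t v\r=\ddt\l F_0(v),1\r,
\]
are destroyed once you test with $A^r\partial_t v$ for $r>0$; neither $\l|\partial_t v|^\rho\partial_{tt}v,A^r\partial_t v\r$ nor $\l f_0(v),A^r\partial_t v\r$ is a time derivative, and the sign property $f_0(v)v\geq F_0(v)\geq 0$ gives you nothing here. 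Estimating these terms instead produces a right-hand side of order $\|\partial_t v\|_{1+r}^{\rho+1}+\|v\|_{1+r}^5\|\partial_t v\|_{1+r}$, which is superlinear in the $\HH^r$-energy and does not close. The claimed upgrade of \eqref{est2_hatv} to $\|\partial_{tt}v\|_{1+r}\leq c$ is likewise circular, since controlling $\l|\partial_t v|^\rho\partial_{tt}v,A^r\partial_{tt}v\r$ already requires $\|\partial_t v\|_{1+r}$. Without the $\HH^r$-bound on $v$, you cannot bound $\|w\|_{1+r}$, and then in the estimate of $\l f_0(u)-f_0(v),A^{r+\sigma}\partial_t w\r$ the term coming from $|w|^5$ contributes $cE_{r+\sigma}^3$ (or at best $cE_{r+\sigma}$ with a non-small coefficient), which prevents the Gronwall argument from yielding a uniform-in-$t$ bound on $\|U_{z_t}(t)0\|_{\HH^{r+\sigma}}$.

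The paper avoids this difficulty by changing the decomposition altogether: once $\A\subset\HH^r$ is known (and in particular once Lemma~\ref{tutt} gives $\|\partial_{tt}u\|_{1+\sigma}\leq c$), it splits $S(t)z=L(t)z+K(t)z$ where $L(t)$ solves the \emph{purely linear} system (no $|\partial_t v|^\rho$, no $f$) and all the nonlinear terms are pushed into the forcing $\gamma=h-f(u)-|\partial_t u|^\rho\partial_{tt}u$ on the $K$-equation. The point is that $\gamma$ now involves only $u$, which lies on the attractor, so $\l\gamma,A^{r+\sigma}\partial_t w\r$ is estimated directly by $c\|\partial_t w\|_{1+r+\sigma}$ using the hypothesis $u\in\H^{1+r}$ and Lemma~\ref{tutt}; no control on $v$ in $\HH^r$ is ever needed, and Gronwall immediately gives $\|K(t)z\|_{\HH^{r+\sigma}}\leq c$.
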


\begin{proof}
Knowing that $\A$ is fully invariant and bounded in $\HH^{r}$,
we split the solution $S(t)z$
with $z\in\A$ into the sum
$$S(t)z=L(t)z+K(t)z,$$
where $L(t)z=(v(t),\partial_t v(t),\xi^t)$ and $K(t)z=(w(t),\partial_t w(t),\psi^t)$ solve the systems
$$
\begin{cases}
\displaystyle
A \partial_{tt}  v
+A\partial_t  v
+A  v+\int_0^\infty\mu(s)A \xi(s)\,\d s=0,\\
\partial_t\xi=T \xi+\partial_t  v,\\
\noalign{\vskip1.5mm}
(v(0),\partial_t  v(0),  \xi^0)=z,
\end{cases}
$$
and
$$
\begin{cases}
\displaystyle
A \partial_{tt}  w
+A\partial_t  w+A  w+\int_0^\infty\mu(s)A  \psi(s)\,\d s=\gamma\\
\partial_t\psi=T \psi+\partial_t  w,\\
\noalign{\vskip1.5mm}
(w(0),\partial_t  w(0), \psi^0)=0,
\end{cases}
$$
where
$$\gamma=h-f(u)-|\partial_t u|^\rho\partial_{tt}u.$$
A direct application of Lemma~\ref{Abound_SYS} together with the Gronwall lemma
to the first system shows that the linear semigroup $L(t)$
decays exponentially in $\HH$, i.e.\
\begin{equation}
\label{bound_attr1}
\|L(t)z\|_{\HH}\leq c\e^{-\omega t},
\end{equation}
for some $c=c(\A)>0$ and $\omega>0$.
Defining
$$E_{r+\sigma}(t)=\frac12\|K(t)z\|_{\HH^{r+\sigma}}^2,$$
Lemma \ref{Abound_SYS} applied to
the second system provides the existence of a functional $\Lambda_{r+\sigma}$ satisfying
\begin{equation}
\label{eqis-r}
\frac12E_{r+\sigma}\leq \Lambda_{r+\sigma}\leq 2E_{r+\sigma}
\end{equation}
and
$$
\ddt\Lambda_{r+\sigma}+\delta E_{r+\sigma}
\leq\l \gamma,\partial_t w\r_{r+\sigma}+\delta\l \gamma,w\r_{r+\sigma}
$$
for all $\delta>0$ sufficiently small.
We now exploit the embeddings\footnote{If $r\geq \frac12$ we exploit $\H^{1+r}\subset L^\infty(\Omega)$.}
$$\H^{1+r}\subset L^{\frac{6}{1-2r}}(\Omega)\subset L^{\frac{3\rho}{2-r}}(\Omega)$$
and Lemma \ref{tutt}
to estimate the right-hand side as
\begin{align*}
-\l|\partial_t u|^\rho\partial_{tt}u,A^{r+\sigma}\partial_t w\r
&\leq\|\partial_t u\|_{L^{3\rho/(2-r)}}^\rho\|\partial_{tt}u\|_{L^{6/(1-2\sigma)}}
\|A^{r+\sigma}\partial_t w\|_{L^{6/(1+2r+2\sigma)}}\\
&\leq c\|\partial_t u\|_{1+r}^\rho\|\partial_{tt} u\|_{1+\sigma}
\|\partial_t w\|_{1+r+\sigma}\\
&\leq c \|\partial_t w\|_{1+r+\sigma}.
\end{align*}
Besides, due to \eqref{GROW}, we have
\begin{align*}
-\l f(u),A^{r+\sigma}\partial_{t} w\r
&\leq \|f(u)\|_{L^{6/(5-2r-2\sigma)}}\|A^{r+\sigma}\partial_{t} w\|_{L^{6/(1+2r+2\sigma)}}\\
&\leq
 c\big(1+\|u\|^5_{1+r}\big)\|\partial_{t}w\|_{1+r+\sigma}\\
 &\leq c\|\partial_{t}w\|_{1+r+\sigma}.
\end{align*}
Furthermore, since $r+\sigma\leq \frac{1+r+\sigma}{2}$,
$$
\l h,A^{r+\sigma}\partial_{t} w\r
\leq \|h\|\|A^{r+\sigma}\partial_{t} w\|
\leq c\|h\|\|\partial_{t}w\|_{1+r+\sigma},
$$
showing that
$$\l \gamma, \partial_t w\r_{r+\sigma}\leq c\|\partial_{t}w\|_{1+r+\sigma}.$$
Analogous computations provides the estimate
$$\delta\l \gamma, w\r_{r+\sigma}\leq \delta c\|w\|_{1+r+\sigma}.$$
We thus end up with the differential inequality
$$
\ddt\Lambda_{r+\sigma}+\frac{\delta}{2} E_{r+\sigma}
\leq c
$$
for some $\delta>0$.
In light of~\eqref{eqis-r}, and recalling that $E_{r+\sigma}(0)=0$, from the Gronwall lemma
we infer that
\begin{equation}
\label{bound_attr2}
\|K(t)z\|_{\HH^{r+\sigma}}\leq c,
\end{equation}
for some $c=c(\A)>0$.
By virtue of \eqref{bound_attr1} and \eqref{bound_attr2},
we conclude that
$$\dist_{\HH}(\A,\BB_{r+\sigma})=\dist_{\HH}(S(t)\A,\BB_{r+\sigma})\leq c\e^{-\omega t}\to 0,$$
where $\BB_{r+\sigma}$ is a closed ball
of $\HH^{r+\sigma}$ of radius sufficiently large. In particular, this yields the inclusion
$\A\subset \BB_{r+\sigma}$
\end{proof}

\begin{proof}[Proof of Theorem \ref{attrH1}]
By reiterated applications of Lemma~\ref{keylemma},
in a finite number of steps we arrive to show that $\A$ is bounded in $\HH^{\s+\sigma}$ with $\s+\sigma=1$.
\end{proof}

\begin{proof}[Proof of Proposition \ref{oppy}]
Let $\zeta(t)=(u(t),\partial_t u(t),\eta^t)$ be a {\cbt}, that is, a solution lying on $\A$.
Fixed an arbitrary $k>0$, let us consider the solution at time $\tau>0$
with initial data $\zeta(t-k)$
$$S(\tau)\zeta(t-k)=(v(\tau),\partial_t v(\tau),\xi^\tau).$$
Observing that
$$v(\tau)=u(t-k+\tau)\and \xi^{\tau}=\eta^{t-k+\tau},$$
the representation formula~\eqref{REP} applied to $\xi^\tau$ yields
$$
\eta^{t-k+\tau}(s)=\xi^\tau(s)=v(\tau)-v(\tau-s)
=u(t-k+\tau)-u(t-k+\tau-s),
$$
for every $s\leq \tau$. Letting now $k=\tau$, we obtain \eqref{ForE} for all $s\leq\tau$,
and from the arbitrariness of $\tau>0$ the claim follows.
\end{proof}

\section*{Appendix: Some Technical Results}
\setcounter{equation}{0}
\setcounter{subsection}{0}
\renewcommand{\theequation}{A.\arabic{equation}}
\renewcommand{\thesubsection}{A.\arabic{subsection}}
\theoremstyle{plain}
\newtheorem{lemmaA}{Lemma}[section]
\renewcommand{\thelemmaA}{A.\arabic{lemmaA}}

\subsection{An auxiliary problem}
Let $\s\in[0,1]$. For a sufficiently regular function $\gamma=\gamma(t)$ on $[0,\infty)$,
let us consider the Cauchy problem in $\HH^\s$
\begin{equation}
\label{SYSaux}
\begin{cases}
\displaystyle
A\partial_{tt}u+A\partial_t u+Au+ \int_0^\infty \mu(s) A\eta(s)\,\d s =\gamma,\\
\partial_t\eta=T\eta+ \partial_t u,
\end{cases}
\end{equation}
with related energy
$$E_\s(t)=\frac12\big[\|u(t)\|_{1+\s}^2+\|\partial_t u(t)\|_{1+\s}^2+\|\eta^t\|_{\M^\s}^2\big].$$

\begin{lemmaA}
\label{Abound_SYS}
For all $\delta>0$ small, there exists $\Lambda_\s$ satisfying
\begin{equation}
\label{Aeqis}
\frac12E_\s\leq \Lambda_\s\leq 2E_\s
\end{equation}
and
\begin{equation}
\label{AeqisB}
\ddt\Lambda_\s+\delta E_\s
\leq\l \gamma,\partial_t u\r_\s+\delta\l \gamma,u\r_\s.
\end{equation}
\end{lemmaA}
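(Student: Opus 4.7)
The plan is to build $\Lambda_\s$ as a small perturbation of $E_\s$ by two auxiliary functionals that generate the missing dissipation for $u$ and $\eta$, closely paralleling the argument used in Section~\ref{SecABS} to derive \eqref{gazzola1}-\eqref{gazzola2}.

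I first carry out the basic energy identity: multiply the first equation of \eqref{SYSaux} by $A^\s\partial_t u$ and test the second against $\eta$ in $\M^\s$, then add. Using the $\s$-analogue of~\eqref{TTT}, namely $\l T\eta,\eta\r_{\M^\s}\leq 0$, and noticing that the two convolution cross terms cancel, this yields
$$\ddt E_\s +\|\partial_t u\|_{1+\s}^2 \leq \l\gamma,\partial_t u\r_\s.$$
To manufacture dissipation of the memory variable, I introduce the $\s$-version of~\eqref{gazzola1},
$$\Psi_\s(t)=\int_0^\infty\!\Big(\int_s^\infty\mu(y)\,\d y\Big)\|\eta^t(s)\|_{1+\s}^2\,\d s,$$
and differentiate, using $\partial_t\eta=T\eta+\partial_t u$. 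An integration by parts in $s$ (the boundary term at $s=0$ vanishes on $\D(T)$, and the inequality is then extended by density) together with the key assumption~\eqref{NEC1} to control the coupling with $\partial_t u$, leads to
$$\ddt\Psi_\s+\tfrac{1}{2}\|\eta\|_{\M^\s}^2 \leq 2\Theta^2\kappa\,\|\partial_t u\|_{1+\s}^2.$$
To manufacture dissipation of $u$, I multiply the first equation of \eqref{SYSaux} by $A^\s u$ and set
$$\Phi_\s(t)=\l \partial_t u(t),u(t)\r_{1+\s}+\tfrac12\|u(t)\|_{1+\s}^2,$$
so that, after estimating the memory coupling via Cauchy--Schwarz and using $\int_0^\infty\mu(s)\,\d s=\kappa$, I get
$$\ddt\Phi_\s+\tfrac{3}{4}\|u\|_{1+\s}^2 \leq \|\partial_t u\|_{1+\s}^2+\kappa\|\eta\|_{\M^\s}^2+\l\gamma,u\r_\s.$$

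I then define $\Lambda_\s=E_\s+\eps\Psi_\s+\delta\Phi_\s$ and sum the three inequalities. Fixing $\eps>0$ so that $1-2\eps\Theta^2\kappa\geq\tfrac12$, and then choosing $\delta>0$ sufficiently small in terms of $\eps,\Theta,\kappa$, every coefficient on the left side can be made at least $\delta$, which produces~\eqref{AeqisB}. The equivalence~\eqref{Aeqis} follows from the elementary bounds $|\Phi_\s|\leq 2E_\s$ (from Cauchy--Schwarz and Young) and $\Psi_\s\leq 2\Theta E_\s$ (from~\eqref{NEC1}, as already noted in~\eqref{psi}), upon reducing $\eps$ and $\delta$ if necessary.

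The only genuinely delicate point is the estimate for $\Psi_\s$: justifying the integration by parts requires the boundary contribution at $s=0$ to vanish, which is precisely where the class $\D(T)$ comes in, and bounding the coupling $\int_0^\infty(\int_s^\infty\mu(y)\d y)\l\eta(s),\partial_t u\r_{1+\s}\d s$ by a multiple of $\|\partial_t u\|_{1+\s}^2$ hinges on~\eqref{NEC1}. Everything else reduces to Young's inequality and the fact that $E_\s$, $\Psi_\s$, $\Phi_\s$ are essentially of the same size.
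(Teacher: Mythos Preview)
Your proposal is correct and follows essentially the same route as the paper: the same functionals $\Psi_\s$ and $\Phi_\s$ are introduced, the same differential inequalities are derived (up to inconsequential differences in the Young-inequality splittings, e.g.\ your $\tfrac34\|u\|_{1+\s}^2$ and $\kappa\|\eta\|_{\M^\s}^2$ versus the paper's $\tfrac12\|u\|_{1+\s}^2$ and $\tfrac\kappa2\|\eta\|_{\M^\s}^2$), and $\eps$ and $\delta$ are chosen in the same order to close both \eqref{Aeqis} and \eqref{AeqisB}.
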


\begin{proof}
We multiply \eqref{SYSaux} by $(\partial_t u,\eta)$ in $\H^{\s}\times\M^\s$, so obtaining
$$
\ddt E_\s+\|\partial_t u\|^2_{1+\s}=\l TA^{\frac{\s}2}\eta,A^{\frac{\s}2}\eta\r_{\M}+\l \gamma,\partial_t u\r_\s
\leq \l \gamma,\partial_t u\r_\s.
$$
We now define the functionals
\begin{align*}
\Psi_\s(t)&=\int_0^\infty\Big(\int_s^\infty \mu(y)\,\d y\Big)\|\eta^t(s)\|_{1+\s}^2\,\d s,\\
\Phi_\s(t)&=\frac12\|u(t)\|_{1+\s}^2+\l u(t),\partial_t u(t)\r_{1+\s},
\end{align*}
which satisfy the bounds (see \eqref{NEC1})
$$0\leq\Psi_\s\leq \Theta\|\eta\|_{\M^\s}^2$$
and
$$|\Phi_\s|\leq \|u\|_{1+\s}^2+\frac12\|\partial_t u\|^2_{1+\s}.$$
Setting
$$\Lambda_\s(t)=E_\s(t)+\eps \Psi_\s(t)+\delta\Phi_\s(t),$$
inequality \eqref{Aeqis} is easily seen to hold for every
$\eps\leq\frac{1}{2\Theta}$ and $\delta\leq\frac14$.
Taking the time derivative of $\Psi_\s$ we get
\begin{align*}
\ddt\Psi_\s+\frac12\|\eta\|_{\M^\s}^2&=-\frac12\|\eta\|_{\M^\s}^2
+2\int_0^\infty\Big(\int_s^\infty \mu(y)\,\d y\Big)\l\eta(s),\partial_t u\r_{1+\s}\,\d s\\
&\leq-\frac12\|\eta\|_{\M^\s}^2+2\Theta\sqrt{\kappa}\|\eta\|_{\M^\s}\|\partial_t u\|_{1+\s}\\
\noalign{\vskip2mm}
&\leq 2\Theta^2\kappa\|\partial_t u\|_{1+\s}^2.
\end{align*}
Besides, a multiplication of the first equation of \eqref{SYSaux} by $A^\s u$ provides
\begin{align*}
\ddt\Phi_\s+\frac12\|u\|^2_{1+\s}&=-\frac12 \|u\|^2_{1+\s}+\|\partial_t u\|^2_{1+\s}
-\int_0^\infty\mu(s)\l \eta(s),u\r_{1+\s}\,\d s
+\l \gamma,u\r_\s\\
&\leq \|\partial_t u\|^2_{1+\s}+\frac\kappa2\|\eta\|_{\M^\s}^2+\l \gamma, u\r_\s.
\end{align*}
Collecting the inequalities above, we end up with
$$
\ddt\Lambda_\s+\frac\delta2\|u\|^2_{1+\s}+(1-2\eps\Theta^2\kappa-\delta)\|\partial_t u\|^2_{1+\s}
+\frac12(\eps-\delta\kappa)\|\eta\|_{\M^\s}^2
\leq\l \gamma,\partial_t u\r_\s+\delta\l \gamma,u\r_\s.
$$
Fixing $\eps\in \big(0,\frac{1}{2\Theta}\big]$ such that
$$1-2\eps\Theta^2\kappa\geq \frac12,$$
inequality \eqref{AeqisB} holds for every $\delta>0$ small.
\end{proof}

\subsection{Two lemmas}
We finally recall two results needed in the investigation.
The first is a compactness lemma in the space $\M$ proved in~\cite{PZ}
(see Lemma 5.5 therein), while the second one is Theorem 3.1 from~\cite{WIN},
written here in a suitable form for our scopes.

\begin{lemmaA}
\label{lemmaPZ}
Let $\Xi$ be a subset of $\D(T)$, and let $r>0$.
If
$$
\sup_{\eta\in\Xi}\big[\|\eta\|_{\M^r}+\|T\eta\|_{\M}\big]<\infty
$$
and the map
$$
s\mapsto \sup_{\eta\in\Xi}\mu(s)\|\eta(s)\|_{1}^2$$
belongs to $L^1(\R^+)$, then $\Xi$ is precompact in $\M$.
\end{lemmaA}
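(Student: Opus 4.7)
The strategy is the standard one for precompactness in weighted $L^2$ spaces of Banach-valued functions: combine uniform tightness in $s$ with an Aubin--Lions type argument on a compact subinterval, exploiting the spatial gain encoded in $\M^r$ and the temporal regularity encoded in $T\eta\in\M$.

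First, I would establish uniform tightness. Since $s\mapsto\sup_{\eta\in\Xi}\mu(s)\|\eta(s)\|_1^2$ belongs to $L^1(\R^+)$, absolute continuity of the integral allows, for every $\eps>0$, the selection of $0<a<b<\infty$ such that
$$
\int_{\R^+\setminus[a,b]}\sup_{\eta\in\Xi}\mu(s)\|\eta(s)\|_1^2\,\d s<\eps^2.
$$
This immediately bounds $\|\eta\|_{L^2_\mu(\R^+\setminus[a,b];\H^1)}$ by $\eps$ uniformly over $\Xi$. Since $\mu$ is nonincreasing (and one can always shrink $b$ to ensure $\mu(b)>0$, as otherwise $\mu$ vanishes identically on $[b,\infty)$ and that portion contributes nothing to $\M$), one has $0<\mu(b)\leq\mu(s)\leq\mu(a)<\infty$ throughout $[a,b]$, so the $L^2_\mu([a,b];\H^1)$ and $L^2([a,b];\H^1)$ norms are equivalent there.

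Next, I would run an Aubin--Lions argument on $[a,b]$. By the norm equivalence, the $\M^r$-bound yields $\{\eta|_{[a,b]}\}$ uniformly bounded in $L^2([a,b];\H^{1+r})$, while the $\M$-bound on $T\eta=-\eta'$ yields $\{\eta'|_{[a,b]}\}$ uniformly bounded in $L^2([a,b];\H^1)$. The compact embedding $\H^{1+r}\hookrightarrow\hookrightarrow\H^1$ (valid for $r>0$ since $A$ has compact resolvent on the bounded domain $\Omega$) then produces, via Aubin--Lions, precompactness of the restrictions $\{\eta|_{[a,b]}\}$ in $L^2([a,b];\H^1)$, and hence in $L^2_\mu([a,b];\H^1)$. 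To conclude, cover this precompact family by finitely many $\eps$-balls; extending each center by $0$ outside $[a,b]$ and combining with the uniform tail bound yields a finite $2\eps$-net for $\Xi$ in $\M$, giving total boundedness and hence precompactness.

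\textbf{Main obstacle.} The only genuine delicacy is accommodating the irregularities of $\mu$: it may blow up at the origin, vanish on a right-tail, and possess jumps, so that one must choose $[a,b]$ carefully to keep $\mu$ bounded between two positive constants there. The $L^1$ envelope hypothesis is precisely what provides uniform control of the tails despite this irregularity; once the interval is fixed and the norm equivalence secured, the remaining Aubin--Lions step is routine.
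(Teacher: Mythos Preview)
Your argument is correct. Note, however, that the paper does not actually prove this lemma: it is stated in the Appendix as a quoted result, with the proof deferred to \cite{PZ} (Lemma~5.5 therein). Your tightness-plus-Aubin--Lions scheme---controlling the tails in $s$ via the $L^1$ envelope hypothesis, then using the $\M^r$ bound for spatial compactness and the $\|T\eta\|_\M$ bound for temporal regularity on a compact interval where $\mu$ is bounded away from $0$ and $\infty$---is precisely the standard route and is, up to cosmetic differences, what the cited reference contains.
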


\begin{lemmaA}
\label{teo_win}
Let $\BB\subset \HH$ be a bounded absorbing set for $S(t)$, and let $r>0$.
For every $z\in\BB$, assume there exist two operators $V_z(t)$ and $U_z(t)$
acting on $\HH$ and $\HH^{r}$, respectively, with
the following properties:
\begin{enumerate}
\item[(i)] given any $y\in\BB$ and any $x\in\HH^{r}$ satisfying the relation $y+x=z$,
$$S(t)z=V_z(t)y+U_z(t)x;$$
\item[(ii)] there exists a positive function $d_1$ vanishing at infinity
such that, for any $y\in\BB$,
$$\sup_{z\in\BB}\|V_z(t)y\|_{\HH}\leq d_1(t)\|y\|_{\HH};$$
\item[(iii)] there exists a positive function $d_2$ vanishing at infinity
such that, for any $x\in\HH^{r}$,
$$\sup_{z\in\BB}\|U_z(t)x\|_{\HH^{r}}\leq d_2(t)\|x\|_{\HH^{r}}+\Q(t),$$
for some $\Q(\cdot)$ independent of $x$.
\end{enumerate}
Then, $\BB$ is exponentially attracted by a closed ball $\BB_r$ of $\HH^{r}$
centered at zero; namely,
there exist (strictly) positive constants $c,\varkappa$ such that
$$\dist_{\HH}(S(t)\BB,\BB_r)\leq c\e^{-\varkappa t}.$$
\end{lemmaA}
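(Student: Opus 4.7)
The plan is to iterate the decomposition of hypothesis (i) at multiples of a large time $t^\star$, chosen so that hypothesis (ii) gives a strict contraction in $\HH$. The crucial point is that, in view of the remark after Theorem~\ref{UNI-ABS}, we may take $\BB$ to be a ball of $\HH$ centered at zero; this way, contracting a vector in $\HH$-norm keeps it inside $\BB$, and the iteration can be continued indefinitely.

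First I would fix $t^\star>0$ so large that $d_1(t^\star)\leq 1/4$ and $d_2(t^\star)\leq 1/2$ (possible since both functions vanish at infinity), and set $R=\sup_{w\in\BB}\|w\|_\HH$. For a given $z\in\BB$, I would build recursively a sequence $(y_n,x_n)\in\BB\times\HH^r$ with $y_n+x_n=z_n:=S(nt^\star)z$, starting from $y_0=z$ and $x_0=0$. Assuming the pair is already defined at step $n$, property (i) applied to $z_n$ reads
$$
S((n+1)t^\star)z=V_{z_n}(t^\star)y_n+U_{z_n}(t^\star)x_n,
$$
and I would set $y_{n+1}=V_{z_n}(t^\star)y_n$ and $x_{n+1}=U_{z_n}(t^\star)x_n$. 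By (ii), $\|y_{n+1}\|_\HH\leq \tfrac14\|y_n\|_\HH$, hence $\|y_n\|_\HH\leq 4^{-n}R$; in particular $y_{n+1}\in\BB$, which is exactly what keeps the recursion consistent. By (iii),
$$
\|x_{n+1}\|_{\HH^r}\leq \tfrac12\|x_n\|_{\HH^r}+\Q(t^\star),
$$
which, together with $x_0=0$, gives the uniform bound $\|x_n\|_{\HH^r}\leq 2\Q(t^\star)$ for every $n$.

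For a generic $t\geq 0$, I would write $t=nt^\star+s$ with $s\in[0,t^\star)$ and apply the decomposition one further time to $z_n$,
$$
S(t)z=V_{z_n}(s)y_n+U_{z_n}(s)x_n.
$$
Since $d_1$ and $d_2$ are bounded on $[0,t^\star]$, the second summand belongs to a fixed closed ball $\BB_r\subset\HH^r$ whose radius depends only on $\BB$ and $t^\star$, while the first has $\HH$-norm controlled by a constant times $4^{-n}$. As $n\geq t/t^\star-1$, this produces the exponential estimate
$$
\dist_\HH(S(t)z,\BB_r)\leq c\,\e^{-\varkappa t},\qquad \varkappa=\frac{\log 4}{t^\star},
$$
uniformly in $z\in\BB$, which is the claim.

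The main obstacle is the consistency of the iteration: hypothesis (ii) at step $n+1$ can be invoked only if the low-regularity component $y_n$ still lies in $\BB$. This is what forces the choice of $t^\star$ so large that $d_1(t^\star)<1$ together with the assumption that the absorbing set be a ball about the origin; everything else reduces to two elementary scalar recursions and a uniform-in-$s$ estimate on the last fractional time interval.
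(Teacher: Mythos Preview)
The paper does not supply its own proof of this lemma: it is quoted verbatim as Theorem~3.1 of~\cite{WIN} and stated in the Appendix without argument. Your iteration scheme---fix a time step at which both $d_1$ and $d_2$ contract, propagate the splitting $z_n=y_n+x_n$ along multiples of that step, and read off a geometric decay for $y_n$ together with a uniform $\HH^r$-bound for $x_n$---is exactly the method of that reference, so your proposal matches the intended proof.

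Two small points are worth tightening. First, at each step you invoke the operators $V_{z_n}$ and $U_{z_n}$, which by hypothesis exist only for $z_n\in\BB$; you verify $y_n\in\BB$ but not $z_n\in\BB$. This is harmless once you also require $t^\star$ to exceed the entering time of $\BB$ into itself (available since $\BB$ is absorbing), so that $z_n=S(t^\star)z_{n-1}\in\BB$ by induction. Second, in the fractional step you assert that $d_1$ and $d_2$ are bounded on $[0,t^\star]$; the lemma as stated only assumes positivity and decay at infinity, so strictly speaking local boundedness is an extra (and, in every application in the paper, trivially satisfied) hypothesis.
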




\bigskip


\end{document}